\newcommand{\R}{\mathscr{A}}
\newcommand{\X}{\mathscr{X}}
\newcommand{\Xf}{\mathscr{X}_{(k_0 ,i)}[r_0]}
\newcommand{\Xfr}{\mathscr{X}_{(k_0,i)}[r]}
\newcommand{\C}{\mathbb{C}_p}
\newcommand{\B}{\mathscr{B}}
\newcommand{\dualD}{\mathfrak{D}}
\newcommand{\W}{\mathscr{W}_N}
\newcommand{\Ao}{\mathbf{A}^\circ}
\newcommand{\AoK}{\mathbf{A}^\circ_{\K}}
\newcommand{\OOO}{\mathscr{A}^\circ}
\newcommand{\K}{\mathcal{K}} 
\newcommand{\OK}{\mathcal{O}_{\K}}
\newcommand{\T}{\mathscr{T}}
\newcommand{\TT}{\mathbf{T}}
\newcommand{\GS}{G_{\mathbb{Q},S}}
\newcommand{\Rf}{R_{(k_0 ,i)}[r_0]}
\newcommand{\Zf}{Z_{(k_0 ,i)}[r_0]}
\newcommand{\Zfr}{\mathscr{Z}_{(k_0 ,i)}[r]}
\newcommand{\ZFf}{\mathscr{Z}_{(k_0 ,i)}[r_0]}
\newcommand{\ZFfr}{\mathscr{Z}_{(k_0 ,i)}[r]}
\newcommand{\ZF}{\mathscr{Z}}
\newtheorem{introtheorem}{Theorem}
\newtheorem{theorem}{Theorem}[section]
\newtheorem{lemma}[theorem]{Lemma}
\newtheorem{proposition}[theorem]{Proposition}
\newtheorem{corollary}[theorem]{Corollary}
\newtheorem{defn}[theorem]{Definition}
\newtheorem{hyp}[theorem]{Hypothesis}
\numberwithin{equation}{section}
\theoremstyle{remark}
\newtheorem{remark}[theorem]{Remark}
\newcommand{\cyc}{\mathrm{cyc}}
\newcommand{\Gal}{\mathrm{Gal}}
\DeclareMathOperator{\Ker}{Ker}
\DeclareMathOperator{\Coker}{Coker}
\DeclareMathOperator{\Hom}{Hom}
\newcommand{\Q}{\mathbb{Q}}
\newcommand{\Z}{\mathbb{Z}}
\newcommand{\N}{\mathbb{N}}
\newcommand{\Sel}{\mathcal{R}}
\newcommand{\OO}{\mathcal{O}_{\K}}
\newcommand{\Lcyc}{\Lambda_{\mathrm{cyc}}}
\newcommand{\resfield}{\kappa}
\newcommand{\cts}{\mathrm{cont}}
\newcommand{\tD}{\widetilde{\mathds{D}}}
\newcommand{\tL}{\widetilde{\AoK}}
\newcommand{\tT}{\widetilde{\TT}}
\newcommand{\D}{{\mathds{D}}}
\newcommand{\Y}{\mathcal{Y}}
\newcommand{\DFS}{\Y\big(\Q, (\tT)^\star(1)\big)}
\newcommand{\tCT}{\widetilde{T}}
\newcommand{\tchi}{\tilde{\chi}}
\newcommand{\tphi}{\tilde{\phi}}
\DeclareFontFamily{U}{wncy}{}
\DeclareFontShape{U}{wncy}{m}{n}{<->wncyr10}{}
\DeclareSymbolFont{mcy}{U}{wncy}{m}{n}
\DeclareMathSymbol{\Sha}{\mathord}{mcy}{"58}
\title[Coleman families and Iwasawa invariants]{A formal model of Coleman families and applications to Iwasawa invariants}
\author[F.~A.~E.~Nuccio]{Filippo A.~E.~Nuccio Mortarino Majno di Capriglio}
\address[F.~A.~E.~Nuccio]{Universit\'{e} Jean Monnet Saint-\'{E}tienne, CNRS, Institut Camille Jordan UMR 5208, \mbox{F-42023}, Saint-\'{E}tienne, France}
\email{filippo.nuccio@univ-st-etienne.fr}
\author[T.~Ochiai]{Tadashi Ochiai}
\address[T.~Ochiai]{Department of Mathematics \\ 
Tokyo Institute of Technology \\ 
2-12-1 Ookayama, Meguro-ku, Tokyo 152-8551, Japan}
\email{ochiai@math.titech.ac.jp}
\author[J.~Ray]{Jishnu Ray}
\address[J.~Ray]{Harish-Chandra Research Institute Chhatnag Road, Jhunsi, Prayagraj (Allahabad) 211 019 India }
\email{jishnuray@hri.res.in; jishnuray1992@gmail.com}
\keywords{fine Selmer group, Iwasawa invariants, Coleman family, modular forms}
\subjclass[2010]{Primary 11F33, 11R23; Secondary 11F80}
\begin{document}

\begin{abstract}
For a given Coleman family of modular forms, we construct a formal model and prove the existence of a family of Galois representations associated to the Coleman family. 
 As an application, we study the variations of Iwasawa $\lambda$- and $\mu$-invariants of dual fine (strict) Selmer groups over the cyclotomic $\Z_p$-extension of $\Q$ in Coleman families of modular forms. This generalizes an earlier work of Jha and Sujatha  for Hida families.
    \medskip

   \noindent\textsc{R\'esum\'e.} Nous construisons un mod\`{e}le formel pour toute famille de Coleman de formes modulaires, et nous montrons l'existence d'une famille de repr\'{e}sentations galoisiennes qui lui est attach\'{e}e. Gr\`{a}ce \`{a} cette construction nous \'{e}tudions la variation des invariants d'Iwasawa $\lambda$ et $\mu$ attach\'{e}s aux duaux des groupes de Selmer fins (strictes) le long de la $\Z_p$-extension cyclotomique de $\Q$ dans des familles de Coleman de formes modulaires. Nos r\'{e}sultats g\'{e}n\'{e}ralisent ceux obtenus par Jha et Sujatha dans le cas de familles de Hida de formes modulaires ordinaires.
\end{abstract}

\maketitle

\section{Introduction}
Throughout the article, we denote by $p$ a prime number greater than $2$, and we let $N$ be a positive integer that is prime to $p$.

In the celebrated papers \cite{Col97c} and \cite{Col97}, Coleman constructed $p$-adic families of elliptic cuspforms  with fixed slope, often referred to as \emph{Coleman families}. Coleman's theory is a non-ordinary generalization of Hida's theory, yet some of the properties of Coleman families are not completely parallel to those of Hida families. For example, Hida families are usually defined over the whole weight space, but Coleman families are usually defined only locally in the weight space. Also, a Hida family is defined over an Iwasawa algebra, or over a complete and semi-local ring which is finite over an Iwasawa algebra, 
whereas a Coleman family as constructed in~\cite{Col97} is defined over an affinoid algebra. 
In particular, Wiles' method of pseudo-representations does not apply \emph{verbatim} to attach a family of Galois representations to a Coleman 
family because affinoid algebras are not semi-local. Yet, this is the more natural setting to study Iwasawa-theoretic questions, that are the main focus of our work. Our aim is to analyse the behaviour of some Iwasawa invariants in Coleman families, and for this we need a convenient algebraic framework to simultaneously treat the ``big'' $p$-adic Galois representations over the whole Coleman family, as well as the ``specialized'' ones attached to each cuspform in the family. Moreover, the study of integrality questions, or growth of denominators, is crucial for defining Iwasawa's $\mu$-invariant and this analysis makes sense only over integral domains where $p$ is not invertible. Thus we devote the first half of the paper to detailing a construction of an integral formal model for Coleman families over a complete local 
$\mathcal{O}_{\mathcal{K}}$-algebra ${\AoK}$ where $\mathcal{O}_{\mathcal{K}}$ is the ring of integers of a finite extension $\mathcal{K}$ of $\mathbb{Q}_p$. We do not recall a precise definition of ${\AoK}$ here, rather pointing the reader to~\eqref{equation:A0}. The spectrum of ${\AoK}$ contains a certain countable infinite set $\Zf$ which is identified with a subset of $\mathbb{Z}_{\geq 2}$ (see~\eqref{equation:definitionZf} for the definition of $\Zf$), so that for every $k\in\Zf$ we can consider the unique classical cuspform $f_k$ that belongs to the Coleman family---or that passes through it, as one can say from a more geometrical viewpoint. Denote by $\rho_k$ the continuous, two-dimensional $\Q_p$-representation attached by Deligne to $f_k$: our first result goes in the direction of $p$-adically interpolating these Galois representations. Explicitly, we prove the following theorem that attaches a ``big'' Galois representation to the given Coleman family by the method of pseudo-representation: we refer to Theorem~\ref{BigGaloisRep} for the precise statement and the proof, here it suffices to say that $S$ is a finite set of rational primes containing $p$, and $P_k$ is the prime ideal in $\AoK$, corresponding to the point of $\operatorname{Spec}{\AoK}$ identified with the integer $k$, and such that $\AoK/P_k$ is a finite $\Z_p$-module.
\begin{introtheorem}\label{introtheorem:NuccioOchiai}
There exists a free ${\AoK}$-module $\TT$ of rank two with a continuous action of unramified-outside-$S$ Galois group $G_{\Q,S}$ such that the representation 
\[
\boldsymbol{\rho}\colon G_{\Q,S} \longrightarrow\mathrm{Aut}_{{\AoK}}(\TT )
\]
satisfies $\pi_{\boldsymbol{\rho}}=\pi$ (in a suitable basis). In particular, $\boldsymbol{\rho}$ modulo the ideal 
$P_k$ corresponding to $k$ is isomorphic to a lattice of $\rho_k$ for all $k\in \Zf$.
\end{introtheorem}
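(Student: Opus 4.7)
The plan is to realize the pseudo-representation $\pi$ (which, by the construction preceding the theorem, interpolates the traces of the classical representations $\rho_k$ for $k\in\Zf$) as the trace of a genuine two-dimensional representation valued in $\mathrm{Aut}_{\AoK}(\TT)\simeq\mathrm{GL}_2(\AoK)$. The main ingredient is the lifting theorem for pseudo-representations over complete local rings due to Nyssen and Rouquier, in the spirit of Wiles' original method referred to in the introduction.

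First I would verify that $\AoK$, as defined in \eqref{equation:A0}, is a complete local Noetherian ring whose residue field is a finite extension of $\mathbb{F}_p$; this is the very reason for constructing the formal model instead of working with the affinoid algebra directly. The residual pseudo-representation $\overline{\pi}\colon G_{\Q,S}\to \AoK/\mathfrak{m}$ coincides with the semisimplified trace of the residual representation attached to the cuspform at the centre of the Coleman family. I would place myself under the standing assumption that this residual representation is absolutely irreducible, which is the hypothesis under which the Iwasawa-theoretic applications are stated; if it failed, one would have to invoke the theory of generalized matrix algebras of Bellaïche–Chenevier, which I would avoid.

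Under absolute irreducibility of $\overline{\pi}$, the theorem of Nyssen–Rouquier yields a continuous representation $\boldsymbol{\rho}\colon G_{\Q,S}\to \mathrm{GL}_2(\AoK)$, unique up to conjugation, such that $\pi_{\boldsymbol{\rho}}=\pi$; the free rank-two $\AoK$-module on which it acts is $\TT$. Continuity is inherited from the continuity of $\pi$ and from the residual irreducibility, which allows one to choose a basis in which the matrix entries are expressible as polynomials in the values of $\pi$ on a finite set of group elements. To prove the specialisation statement, I would reduce $\boldsymbol{\rho}$ modulo the prime $P_k$ for $k\in\Zf$. By construction of $\pi$, the characteristic polynomial of Frobenius at any prime $\ell\notin S$ agrees with that of $\rho_k$ modulo $P_k$; since $\rho_k$ is absolutely irreducible (Deligne's representation attached to a cuspidal eigenform, cf.\ Ribet), it is determined by its trace, and therefore $\boldsymbol{\rho}\otimes_{\AoK}\AoK/P_k$ is, after inverting $p$, isomorphic to $\rho_k$. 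Its image is then a Galois-stable $\AoK/P_k$-lattice in $\rho_k$, as required.

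The main obstacle is not the lifting step itself, which is formal once the hypotheses are in place, but rather the integrality statement hidden in the equality $\pi_{\boldsymbol{\rho}}=\pi$: one must know that $\pi$ takes values in $\AoK$, not merely in $\AoK[1/p]$ or in the ambient affinoid algebra. This is exactly the payoff of the formal model construction carried out in the first half of the paper, and it should already be encoded in the construction of $\pi$ used as input; absent this integrality, the resulting lattice would not be defined over $\AoK$ and the Iwasawa-theoretic arguments on $\mu$-invariants would collapse. A secondary technical point is to ensure that the Nyssen–Rouquier lifting is compatible with the topology on $\AoK$ coming from its maximal ideal, which requires checking that the pseudo-representation is continuous with respect to the $\mathfrak{m}$-adic topology; this should follow from the interpolation property at the arithmetic points of $\Zf$ together with their Zariski-density in $\mathrm{Spec}\,\AoK$.
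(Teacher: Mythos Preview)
Your approach via Nyssen--Rouquier is valid and cleaner when it applies, but it proves a strictly weaker statement than the paper's Theorem~\ref{BigGaloisRep}. The paper states and proves the existence of $\TT$ \emph{without} any residual irreducibility hypothesis; the assumption you impose (absolute irreducibility of $\boldsymbol{\rho}\bmod\mathfrak{m}$) appears only later in the paper as Hypothesis~\ref{hyp}, and there it is the stronger requirement of irreducibility upon restriction to $G_{\Q_p}$, introduced solely for the Selmer-group arguments. So your ``standing assumption'' is not actually standing for this theorem, and invoking it here loses generality.

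The paper instead follows Wiles' explicit route (as in \cite{Wil88} and \cite[\S7.5]{Hid93}). Using only the irreducibility of the \emph{characteristic-zero} representation $\rho_{k_0}$ (Ribet), one finds $\sigma,\tau$ with $\Xi(\sigma,\tau)(k_0)\neq 0$, writes $\Xi(\sigma,\tau)=p^{\mu}V^{-1}$ with $V\in(\AoK)^{\times}$, and defines $\boldsymbol{\rho}$ by the explicit matrix with entries $A$, $D$, $\Xi(g,\tau)Vp^{-\mu}$, $\Xi(\sigma,g)$. This lands only in $\mathrm{GL}_2(\AoK\otimes\Q_p)$, not in $\mathrm{GL}_2(\AoK)$; the paper then builds a Galois-stable finitely generated $\AoK$-submodule $\TT'$ (using the ideal $\mathfrak{J}$ generated by the lower-left entries) and passes to its double dual $\TT=(\TT')^{**}$. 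The punchline is that $\AoK$ is a \emph{two-dimensional regular local ring}, so reflexive modules are free---this is how freeness of rank two is obtained without ever assuming residual irreducibility. Your Nyssen--Rouquier argument bypasses this step entirely, at the cost of the extra hypothesis; the paper's argument is more laborious but unconditional.
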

The construction of a big Galois representation attached to a Coleman family is not new. Already in the foundational articles~\cite{Col97} and~\cite{ColMaz98} the authors described it, and another approach (based on the theory of modular symbols) was later proposed by Bella\"{i}che in~\cite{Bel12}. In these works, the Galois representation is supported on a rank-$2$ module over a Tate algebra, or possibly over the subalgebra of power-bounded elements in such a Tate algebra. This ring is not semi-local and therefore these constructions were not suited for our specialization techniques for fine Selmer groups in the setting of Iwasawa theory. The definition of a formal model for the Galois representation attached to a Coleman family that we present in this paper was first introduced in the preprint~\cite{NucOch16}, that has received some citations but has finally not been published. We take the opportunity to present the construction here, together with its Iwasawa-theoretic applications. Independently from ours, another construction of a formal model is explained in~\cite[Remark~1.4 and Remark~A.1]{BL21} and relies on Loeffler--Zerbes' work~\cite{LoeZer16}, where the representation arises in the cohomology of ``big'' \'{e}tale sheaves on modular curves as defined in~\cite{AndIovSte15}. Our approach differs from theirs, as it mainly relies on the theory of pseudo-representations as in Wiles' original work~\cite{Wil88} (see also Hida's monograph~\cite{Hid93}). It has the advantage that it does not use any specific context of elliptic modular forms so that it can be generalised to the setting of Coleman families associated to more general Shimura varieties.

In the second half of this article, we analyse some Iwasawa-theoretic properties of the family of Galois representations constructed in the above theorem. This generalizes results of Greenberg--Vatsal (see~\cite{GreenbergVatsal}) and of Emerton--Pollack--Weston (see~\cite{EPW}), who studied the variations of  Iwasawa $\lambda$- and $\mu$-invariants of dual Selmer groups over the cyclotomic $\Z_p$-extension of $\Q$ for families of $p$-ordinary modular forms. More precisely, let $\bar{r}\colon G_\Q \rightarrow \operatorname{GL}_2(\resfield)$ be an absolutely irreducible modular Galois representation of $G_\Q$ over a finite field $\resfield$ of characteristic $p$. Assume that $\bar{r}$ is $p$-ordinary and $p$-distinguished in the sense that the restriction of $\bar{r}$ to a decomposition subgroup at $p$ is reducible and non-scalar (see \cite[p.~523]{EPW}), and let $\mathcal{H}(\bar{r})$ be the Hida family of~$\bar{r}$. Emerton--Pollack--Weston showed that if the $\mu$-invariant for the dual Selmer group for some $f_0 \in \mathcal{H}(\bar{r})$ is trivial, then it is so for all $f \in \mathcal{H}(\bar{r})$. Further, they showed that, under the assumption that the $\mu$-invariant is zero, the $\lambda$-invariant is constant across a branch in the Hida family $\mathcal{H}(\bar{r})$. 

Our goal is to study an Emerton--Pollack--Weston type result for the \emph{fine} Selmer group in a Coleman family. The fine Selmer group is a much studied object in Iwasawa theory and occurs in the formulation of the Iwasawa Main Conjecture for elliptic curves and modular forms.
In \cite{CoatesSujatha_fineSelmer}, Coates and Sujatha initiated a systematic study of the fine Selmer group and formulated conjectures about its structure over a $p$-adic Lie extension. Under certain hypotheses, they have shown that Iwasawa's $\mu$-invariant Conjecture for the cyclotomic extension of a number field is equivalent to Conjecture A on the dual fine Selmer group of an elliptic curve (see \cite[Theorem 3.4]{CoatesSujatha_fineSelmer}). Conjecture A says that the $\mu$-invariant of the dual fine Selmer group of an elliptic curve over the cyclotomic $\Z_p$-extension of a number field is trivial.
Observe that the Pontryagin dual of the fine Selmer group over the cyclotomic $\Z_p$-extension of $\Q$ is proved to be torsion by a  deep result of Kato \cite[Theorem~12.4]{Kato} regardless of whether $f$ is ordinary at $p$ or not. An interested reader might also want to view the Iwasawa Main Conjecture framed in terms of dual fine Selmer group; see \cite[Conjecture~6.1]{kurihara02} or \cite[Proposition~7.1 (ii)]{Kobayashi} and \cite[Conjecture~12.10]{Kato}. This is the version of  the Iwasawa Main Conjecture without $p$-adic $L$-functions, framed in terms of fine Selmer groups, and this is what is usually considered when dealing with non-ordinary families (see \cite[Section~5]{NakamuraK}). This justifies the use of fine Selmer groups in this paper; the construction of plus and minus Selmer groups over a Coleman family  is unattainable by known techniques.

Variations of Iwasawa invariants for the dual fine Selmer group in Hida families of ordinary modular forms have been studied by Jha and Sujatha in~\cite{Somnath}. In this article, we go beyond their work and study these variations for the dual fine Selmer group when the modular forms vary in a Coleman family. We define a universal ``big'' fine Selmer group for the Galois representation $\TT$ produced in Theorem~\ref{introtheorem:NuccioOchiai}, and we show that it is cotorsion over a two-variables Iwasawa algebra. Further, we specialize this big fine Selmer group at classical points and study its relation with the classical fine Selmer group (see Theorem~\ref{thm:specialization}). In this context, our main result is the following, where $\boldsymbol{\rho}$ is as in Theorem~\ref{introtheorem:NuccioOchiai}:

\begin{introtheorem}[see Theorems \ref{thm:main_mu} and \ref{thm:lambda}]\label{introtheorem:Ray}
Assume that the residual representation of $\boldsymbol{\rho}$ when restricted to $G_{\Q_p}$ is irreducible. Given any $k\in\Zf$, the $\mu$-invariant of the dual fine Selmer group associated to $\rho_k$ is zero if and only if the same is true for any other classical modular form in the Coleman family. Under the assumption that the $\mu$-invariant of one, or of any, form is zero, the $\lambda$-invariants are constant for all but finitely many classical modular forms in the Coleman family.
\end{introtheorem}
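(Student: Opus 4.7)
The plan is to reduce both parts of the statement to structural properties of a ``big'' fine Selmer group $\mathcal{R}_\infty$ attached to the Galois representation $\TT$ of Theorem~\ref{introtheorem:NuccioOchiai}, regarded as a finitely generated torsion module over the two-variable Iwasawa algebra $\AoK[[\Gamma]]$ with $\Gamma = \Gal(\Q_\cyc/\Q)$. The two ingredients we shall rely on are the cotorsion-ness of $\mathcal{R}_\infty$, alluded to just before the theorem statement, together with a specialization/control theorem stating that $\mathcal{R}_\infty/P_k\mathcal{R}_\infty$ differs from the classical dual fine Selmer group $\mathcal{R}_\cyc(\rho_k)$ only by finite kernel and cokernel, with orders bounded uniformly as $k$ varies in $\Zf$.

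For the $\mu$-invariant statement, the key observation is that by Theorem~\ref{introtheorem:NuccioOchiai} every $\rho_k$ is an integral lattice whose reduction modulo a uniformizer $\varpi$ of $\OK$ is the \emph{same} residual representation $\bar{\rho}$ across the entire Coleman family. Consequently, the mod-$\varpi$ dual fine Selmer group over $\Q_\cyc$ attached to $\rho_k$ depends, up to finite-order terms, only on $\bar{\rho}$. The hypothesis that $\bar{\rho}|_{G_{\Q_p}}$ is irreducible is needed at precisely this step: it forces $H^0(\Q_p,\bar{\rho}) = 0 = H^0(\Q_p, \bar{\rho}^{\vee}(1))$, which in turn makes the local error terms at $p$ in the mod-$\varpi$ reduction sequence finite and independent of $k$. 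Since $\mu_k = 0$ is equivalent to the finiteness of the mod-$\varpi$ dual fine Selmer group, the vanishing of $\mu_k$ is a property of $\bar{\rho}$ alone, and therefore either holds simultaneously for all $k \in \Zf$ or fails for all of them.

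For the $\lambda$-invariant statement, assume $\mu$ vanishes throughout the family. Then $\mathrm{char}_{\AoK[[\Gamma]]}(\mathcal{R}_\infty)$ is coprime to $\varpi$ and cuts out an effective divisor $\mathcal{Z}$ on $\mathrm{Spec}(\AoK[[\Gamma]])$. Using the control theorem together with $\mu_k = 0$, one shows that the $\lambda$-invariant of $\mathcal{R}_\cyc(\rho_k)$ equals the intersection multiplicity of $\mathcal{Z}$ with the vertical divisor cut out by $P_k$. As $k$ ranges over $\Zf$, only finitely many of these vertical divisors meet $\mathcal{Z}$ non-transversally or pass through one of its special components, and for all other $k$ the intersection multiplicity coincides with the generic horizontal degree of $\mathcal{Z}$, yielding a common value of $\lambda_k$.

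The main obstacle will be the control theorem itself, with bounds sufficiently uniform to carry both the mod-$\varpi$ argument and the intersection-theoretic computation of $\lambda$. The ordinary case treated by Jha--Sujatha and by Emerton--Pollack--Weston benefits from an ordinary filtration at $p$ through which specialization on Greenberg Selmer groups can be analysed; in the non-ordinary Coleman setting this filtration is unavailable, and the use of the \emph{fine} Selmer group is essential because it imposes no local condition at $p$ beyond discarding $H^1(\Q_p,\cdot)$. Even so, the formal model $\AoK[[\Gamma]]$ — which, unlike a classical Iwasawa algebra, arises from an affinoid rather than a semi-local context — requires care on the commutative-algebra side to guarantee that the relevant control sequences behave well, and it is in verifying this uniform control, combined with the finiteness of local terms at $p$ under the residual irreducibility assumption, that the bulk of the technical work will concentrate.
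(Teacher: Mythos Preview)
Your overall plan is sound and, for the $\mu$-statement, essentially matches the paper's proof of Theorem~\ref{thm:main_mu}: a Kummer-sequence diagram shows that $\Sel(\Q(\mu_{p^\infty}),A_{f_k})[\varpi]$ is finite if and only if $\Sel(\Q(\mu_{p^\infty}),A_{f_k}[\varpi])$ is, and the latter depends only on the common residual representation. One correction: you locate Hypothesis~\ref{hyp} in this step, claiming it is needed to make the local error terms at $p$ finite. In fact those terms, namely $H^0(\Q(\mu_{p^\infty})_v,A_{f_k})/\varpi$, are finite for every $v\in S$ simply because each $H^0$ is $\OK$-cofinitely generated; the paper's $(2)\Leftrightarrow(3)$ argument uses no local irreducibility. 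Hypothesis~\ref{hyp} is instead invoked inside the specialization result Theorem~\ref{thm:specialization}, to obtain $H^0(\Q_S/\Q,\tD[M])=0$ and hence that the global map $g_k$ in diagram~\eqref{fundamental_diagram} is an isomorphism, making $s_k$ surjective with controlled kernel. (Also, the paper only proves $\Ker(s_k)$ finite for all but finitely many $k$, not that the orders are uniformly bounded; the stronger claim is unnecessary.)

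For the $\lambda$-statement your route via the characteristic divisor $\mathcal{Z}\subset\operatorname{Spec}\tL^\eta$ and intersection multiplicities differs from the paper's and carries a genuine subtlety: $\tL^\eta\cong\OK[[X,Y]]$ has Krull dimension three, so pseudo-null $\tL^\eta$-modules need not be finite, and the defects in a pseudo-isomorphism to an elementary module may contribute to the $\OK$-rank after reducing modulo $J_k$; your intersection formula would have to account for this. The paper sidesteps the issue entirely by first using $\mu=0$ to conclude that $\DFS^\eta$ is finitely generated over $\AoK$ (this is condition~(1) of Theorem~\ref{thm:main_mu}), and then applying the structure theorem over the \emph{two}-dimensional ring $\AoK$, where pseudo-null does mean finite. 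The $\lambda$-invariant is then simply the $\AoK$-rank of $\DFS^\eta$, visibly constant once $P_k$ avoids the finitely many primes supporting the $\AoK$-torsion submodule and the finitely many $k$ with infinite $\Ker(s_k)$. This is both shorter and avoids the three-dimensional complication.
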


Other authors have studied the question of the variation of Iwasawa invariants in families of congruent modular forms. In \cite{Kim09}, Kim studied Iwasawa $\lambda$- and $\mu$-invariants for  the dual  signed Selmer groups over the cyclotomic $\Z_p$-extension of $\Q$ for elliptic curves at supersingular primes. He proved that they are constant for a family of elliptic curves with the same residual representation if the $\mu$-invariant of any of them is zero. Kim's result was later generalized by the first-named author and Sujatha in~\cite{NucSuj21} in a more general, but still supersingular, setting. Under Conjecture A and a few other additional assumptions  they could give a criterion when the $\mu$-invariant of the dual signed Selmer group vanishes purely in terms of a signed \textit{residual} Selmer group (see \cite[Theorem~4.13]{NucSuj21}).
Variations of Iwasawa invariants for dual  fine Selmer groups were also studied by Kim, Lee and Ponsinet in~\cite{KLP} but there the authors considered a family of modular forms with fixed weight and varying tame levels, contrary to our set-up. Hatley and Lei considered the case of the variations of Iwasawa $\lambda$- and $\mu$-invariants for dual signed Selmer groups for non-ordinary congruent modular forms and proved a result similar to Kim's (see \cite[Theorem~4.6]{HatleyLei}). In \cite{LimSujatha}, Lim and Sujatha studied fine Selmer groups of two congruent Galois representations over an admissible $p$-adic Lie extension and showed that, under appropriate congruence conditions, if the dual fine Selmer group of one form is pseudo-null, the same holds for the other form (see \cite[Theorem~3.7]{{LimSujatha}}).

The structure of the paper is as follows. In Section~\ref{sec:affinoids}, we prepare some generalities on affinoid algebras and on the weight space in relation to Coleman families. Based on this preparation, we construct a formal model of a given Coleman family in Section~\ref{section:family_of_modular_forms}. 
In Section~\ref{sec:biggaloisrep}, we explain the construction of the Galois representation giving the proof of Theorem~\ref{introtheorem:NuccioOchiai} (in the precise formulation of Theorem~\ref{BigGaloisRep}). In Section~\ref{sec:deformations}, we review basic facts on  Galois deformations of Coleman families and construct the big fine Selmer group associated to a Coleman family. The specialization theorem at an arithmetic prime is given in Section~\ref{sub:specialization}. We use this specialization theorem to show that the big fine Selmer group is cotorsion over a two-variables Iwasawa algebra. Our results about variations of the Iwasawa invariants, summarized in Theorem~\ref{introtheorem:Ray}, are obtained in Section~\ref{sub:maisection}.

\section*{Acknowledgement}
 The authors thank Adrian Iovita and Somnath Jha for answering several questions and for their comments on an earlier draft of this article. We warmly thank the two referees for a very carefully reading and for suggesting several corrections that helped improving the presentation and the mathematics of our manuscript. The third author is supported by the Inspire research grant.

\section{Preliminaries on affinoids and weight spaces}\label{sec:affinoids}
We refer the reader to \cite{BosGunRem84} for our conventions and basic results about rigid analytic spaces in the sense of Tate. Recall that $p$ is a fixed odd prime and $N\geq 1$ satisfies $(p,N)=1$.

Let $K$ be a complete subfield of $\C$. The field $K$ can be either 
a finite or an infinite extension of $\Q_p$.  
Let $\X$ be an affinoid space defined over $K$. We write $\R_\X$ for the ring of analytic functions on $\X$ and $\OOO_{\X}$ for the subring of power-bounded elements (see \cite[Section~1.2.5]{BosGunRem84}). They will always be endowed with their Gau\ss \ 
semi-norm (which is a norm and coincides with the $\sup$-norm if $\X$ is reduced). 
When $K$ is a discrete valuation field, the ring of power-bounded elements $\OOO_{\X}$ 
is noetherian because it is a quotient of $\mathcal{O}_K \langle T_1,\hdots,T_n\rangle$ 
and $\mathcal{O}_K \langle T_1,\hdots,T_n\rangle$
is the $p$-adic completion of a polynomial algebra
over $\mathcal{O}_K$. 
The ring $\R_\X=\OOO_\X[\frac{1}{p}]$ is noetherian whether $K$ is a discrete valuation 
field or not. For every maximal ideal $\mathfrak{m}\subseteq \R_\X$, the quotient
$\R_\X/\mathfrak{m}$ is a finite extension of $K$ by \cite[Section~6.1.2, Corollary 3]{BosGunRem84} 
and $\OOO_{\X}/\mathfrak{m}^\circ$ is a domain which is finite over $\mathcal{O}_K$ 
by \cite[Section~6.1.3, Proposition 3]{BosGunRem84}.
  
We consider the following definition:
\begin{defn} 
Let $K$ be a complete subfield of $\C$ 
and let $\X$ be an affinoid space over $K$. 
A subset $\ZF$ of the set of $K$-valued points $\X(K)$ 
is said to be Zariski-dense in $\X$ if we have $U(K) \cap \ZF\neq\emptyset $
for every non-empty Zariski-open subspace $U\subseteq \X$. 
\end{defn}
Given $x_0\in K$ and $r\in p^{\mathbb{Q}}$, 
we denote by $\B[x_0 ,r]_K$ and $\B(x_0 ,r)_K$, respectively, the closed and open ball of radius $r$ and centre $x_0$, 
seen as $K$-rigid analytic spaces (see \cite[Section~7]{deJ95} for a description of the second space). We note that we normalize the $p$-adic 
absolute value $\vert \ \ \vert$ so that 
$\vert p \vert  = \frac{1}{p}$. 
For example, in the case $r=1$ and $x_0 \in K$, $\OOO_{\B[x_0,1]_K}$ is isomorphic to 
the following ring of restricted power series with coefficients in $\mathcal{O}_K$: 
$$ 
\mathcal{O}_K \langle T-x_0 \rangle =
\Big\{\sum^\infty_{i=0}c_i (T-x_0)^i \in \mathcal{O}_K [\![ T-x_0 ]\!] 
 \ \Big\vert \ \lim_{i\to\infty}\vert c_i\vert =0\Big\} . 
$$
Finally, given any complete subfield $L\subseteq\mathbb{C}_p$, we also need the notation $B[a,r]_L$ and $B(a,r)_L$ for the \emph{set }of all $x\in L$ such that $\vert x-a\vert \leq r$ (respectively, such that $\vert x-a\vert <r$). When $K=\Q_p$, we denote $B[x_0,r]_{\Q_p}$ (resp.~$B(x_0,r)_{\Q_p}$) by $B[x_0,r]$ (resp.~$B(x_0,r)$) 
dropping the subscript.
\begin{lemma}\label{basic:Zariski-dense}
Let $K$ be a complete subfield of $\C$ which is a discrete valuation field. 
\begin{enumerate}[label=\textup{(}\arabic*\textup{)}]
\item When $\X$ is a reduced affinoid defined over $K$ and $f\in \R_\X$ vanishes on every point of a Zariski-dense subset $Z$, we have $f=0$.
\item Let $x_0 \in K$. Every infinite set inside $\B[x_0,1]_K (K)$ is Zariski-dense.
\end{enumerate}
\end{lemma}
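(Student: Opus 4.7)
The plan is to reformulate each assertion in terms of Zariski-closed subsets of the maximal spectrum and to appeal to two standard structural facts about affinoid algebras over a discretely valued field $K$: the Nullstellensatz for reduced affinoids, and Weierstrass preparation for the one-variable Tate algebra.

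For part (1), I would argue by contradiction using the vanishing locus $V(f):=\{\mathfrak{m}\subseteq\R_\X\mid f\in\mathfrak{m}\}$, which is Zariski-closed in $\X$. Since every $z\in Z\subseteq\X(K)$ corresponds to a maximal ideal with residue field $K$, the condition $f(z)=0$ gives $z\in V(f)$, so $Z\subseteq V(f)$. If $V(f)$ were a proper subset of $\X$, the complement $U:=\X\setminus V(f)$ would be a non-empty Zariski-open subspace, and Zariski-density of $Z$ would produce a point of $Z\cap U(K)$, contradicting $Z\subseteq V(f)$. Hence $V(f)=\X$, i.e.\ $f$ lies in every maximal ideal of the Jacobson ring $\R_\X$; since $\X$ is reduced, the Jacobson radical coincides with the nilradical, which vanishes, so $f=0$.

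For part (2), I would exploit the fact that the Tate algebra $\R_{\B[x_0,1]_K}=K\langle T-x_0\rangle$ is a principal ideal domain whenever $K$ is a complete discretely valued field: this follows from Weierstrass preparation, by which every non-zero element factors as the product of a unit of $K\langle T-x_0\rangle$ and a distinguished polynomial, so every ideal is generated by a single element whose zero set is finite. Any proper Zariski-closed subset $V\subsetneq\B[x_0,1]_K$ is then the vanishing locus of such a non-zero element and hence consists of finitely many points. Since $Z\subseteq B[x_0,1]_K(K)$ is infinite, it cannot be contained in any such $V$; in particular, for every non-empty Zariski-open $U=\B[x_0,1]_K\setminus V$ the set $Z\cap U(K)$ is non-empty, which is exactly the Zariski-density claim.

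The main input, and the only reason the discrete valuation hypothesis on $K$ is used, is the principal-ideal-domain structure of $K\langle T-x_0\rangle$; once this is accepted, both parts reduce to elementary manipulations of open and closed subspaces of the maximal spectrum combined with the Jacobson-ring properties of affinoid algebras recalled from~\cite{BosGunRem84}. No serious obstacle is expected beyond correctly citing these facts.
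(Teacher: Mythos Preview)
Your proof is correct and follows essentially the same approach as the paper: for part~(1) both arguments contrapose via the Zariski-open set $U_f=\X\setminus V(f)$ and use that a reduced affinoid is Jacobson, and for part~(2) both invoke Weierstra\ss\ preparation to see that the one-variable Tate algebra is a PID, whence proper Zariski-closed subsets are finite. One minor remark: your closing comment that the discrete valuation hypothesis is needed for the PID structure is not quite right---Weierstra\ss\ preparation (and hence the PID property of $K\langle T-x_0\rangle$) holds over any complete non-archimedean field, so the hypothesis is in fact not used in either proof.
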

\begin{proof}
For the first assertion, suppose $f\not= 0$ and consider the Zariski-open subset $U_f =\{x\in \X\text{ such that }f(x)\neq 0\}$ of $\X$. 
Since $\X$ is reduced and $f\neq 0$, we have $U_f\neq\emptyset$. 
By the assumption that $Z \subset \X$ is a Zariski-dense subset, we have $Z\cap U_f(K) \neq\emptyset$.
For any point $z\in Z\cap U_f(K)$, we have $f(z)=0$, contradicting the definition of 
$U_f$.
 
We pass to the second assertion. By the Weierstra{\ss} Preparation Theorem 
(\cite[Section~5.2.2, Theorem~1]{BosGunRem84}), 
every function $f\in \R_{\B[0,1]_K}$ can be factored as $f=P\cdot U$ where $P\in K 
[T-x_0 ]$ is a polynomial and $U\in \R_{\B[x_0,1]_K}^\times$ is an invertible power series which does not vanish on $\B[x_0,1]_K$. It follows that every such $f$ has only finitely many zeroes and that $\R_{\B[x_0,1]_K}$ is a PID, showing that non-trivial Zariski-closed sets in $\B[x_0,1]_K$ consist of finitely many points.
\end{proof}
One of the main rigid spaces of interest for us is the weight space $\W$, which is isomorphic to $\varphi(Np)$ copies of $\B(1,1)_K$ indexed by
\[
\dualD=\mathrm{Hom}\big((\mathbb{Z}/Np\mathbb{Z})^\times,\mathbb{C}_p^\times\big) .
\]
For generalities about $\W$, we refer to~\cite[Section~B1]{Col97} and to~\cite[Section~1.4]{ColMaz98}. 
For more detailed accounts, we refer to~\cite[Chapter~I.3, \S4 and Appendix]{Gou88} and~\cite[p.~103]{Buz07}. By definition, the weight space satisfies 
\[
\W(\C )=\operatorname{Hom}_\mathrm{cont}(\varprojlim_n(\mathbb{Z}/Np^n\mathbb{Z})^\times,
\C^\times) .
\]
Following Coleman and Mazur, we give the following definition: 
\begin{defn}\label{def:weightSp} 
Denote by $\omega\colon\mu_p(\Z_p)\to \Z_p^\times$ the Teichm\"{u}ller character and denote the 
the projection $x\mapsto x/\omega(x)$ by $\langle\!\langle \ \rangle\!\rangle\colon\Z_p^\times\to 1+p\Z_p$. For every integer $k\in\Z$ and every character $\chi\in\dualD$ of finite order, the point 
$\chi\langle\!\langle \ \rangle\!\rangle^k\in\W(\Q_p )$ is called an \emph{accessible} weight-character with coordinates $(\chi,k)$.
\end{defn}
As detailed in \cite[Definition~in~Section~1.4]{ColMaz98} the accessible weight-characters are parametrized by the rigid analytic subspace $\W^\ast=\dualD\times \B^\ast\subseteq \W$ where $\B^\ast$ is the subdisk of $\B(1,1)$ which is the isomorphic image of $\B(0,p^{\frac{p-2}{p-1}})$ via the map $s\mapsto (1+p)^s$, so
\begin{equation}\label{B*}
\B(0,p^{\frac{p-2}{p-1}})\cong\B^*\subseteq \B(1,1).
\end{equation}
In the notation introduced in Definition \ref{def:weightSp}, the character $\chi\langle\!\langle \ \rangle\!\rangle^k$ is represented by the point $(\chi,(1+p)^k)\in\W(\Q_p )$ which gets mapped to $(\chi,k)$ by the identification in \eqref{B*}; we see that the word ``coordinates'' comes from seeing $\W^*$ as $\varphi(Np)$-copies of $\B(0,p^{\frac{p-2}{p-1}})$. From now on we systematically write points in the weight space through their coordinates. In particular, for every fixed $\chi\in\dualD$, the weights of characters with Nebentypus $\chi$ will be regarded as points in $\B(0,p^{\frac{p-2}{p-1}})$ rather than in $\B(1,1)$.

The assumption $(N,p)=1$ allows us to consider the group $\mathrm{Hom}\big((\mathbb{Z}/p)^\times,\C^\times\big)$ as being a subgroup of $\dualD$. It thus makes sense, for each $0\leq j\leq p-2$, to interpret $\omega^j$ as an element of $\dualD$. The characters $x\mapsto x^k$, which are accessible with coordinates $(1,k)$, are then the elements of $\W^\ast(\Q_p)$ which belong to the $\omega^k$-th copy of $\B^*$.

\section{\texorpdfstring{$p$-adic families of modular forms}{}}\label{section:family_of_modular_forms}
We start with a classical eigencuspform $f\in S_{k_0}(\Gamma_1(Np),\varepsilon)$ of weight $k_0$, level $Np$ and Nebentypus $\varepsilon$, which we factor as a product $\varepsilon=\varepsilon_N\omega^{k_0-i}$ for some character $\varepsilon_N$ of conductor divisible by $N$ and some $0\leq i\leq p-1$. Our main references concerning $p$-adic modular forms and $p$-adic families thereof are~\cite[Part~B]{Col97} as well as~\cite{Gou88}, in particular Section II.3 \emph{ibid.}~for the definition of the $U_p$-operator.
\begin{defn} Let $f$ be a $p$-adic modular form of level $Np$ which is 
an eigenvector with respect to the $U_p$-operator. We define the slope of $f$ to be the $p$-adic valuation of the $U_p$-eigenvalue of $f$. It is a non-negative rational number.
\end{defn}
We assume that the slope of $f$ is $0\leq\alpha \leq k_0-1$. In \cite{Col97}, \cite{Col97c} and \cite{ColMaz98} Coleman and Coleman--Mazur have built a theory of families of $p$-adic modular forms of slope $\alpha$ interpolating $f$, a part of which is stated in Theorem \ref{Stevens} below. Let us introduce some notation. Given an element $k_0 \in \mathbb{Q}_p$, an integer $i$ and an element $r\in p^\mathbb{Q}$ with $r<p^\frac{p-2}{p-1}$, 
we denote by $\Xfr$ the affinoid subspace of the weight space defined as
\[
\Xf :=\{\varepsilon_N\omega^i\}\times\B[k_0,r]\subseteq \W^* . 
\]
Let us consider a finite extension $\K$ of $\Q_p$, which will play the role of the field of 
coefficients of motives associated to cuspforms $f$ in the given Coleman family. 
We note that $\K$ has nothing to do with the field of definition of motives  
associated to cuspforms $f$ in the given Coleman family, for which 
we can choose $\Q_p$. 
From now on, we denote by ${\OOO_{\Xfr}}_{/\K}$ (resp. ${\R_{\Xfr}}_{/\K}$) 
the extension of coefficients $\OOO_{\Xfr}  \otimes_{\Z_p} \mathcal{O}_\K$ 
(resp. $\R_{\Xfr} \otimes_{\Q_p}\K$) where $\mathcal{O}_\K$ 
is the ring of integers of $\K$. 
We have the following result thanks to \cite{Col97}: 
\begin{theorem} [\cite{Col97}]\label{Stevens} Suppose that $f$ is a classical normalized cuspidal eigenform, of weight $k_0$, level $\Gamma_1(Np)$, slope $\alpha<k_0-1$, Nebentypus $\varepsilon=\varepsilon_N\omega^{i-k_0}$ and which is new away from $p$. When $i=0$, suppose moreover that $a^2\neq \varepsilon_N(p)p^{k_0-1}$, where $a$ is the $U_p$-eigenvalue of $f$.
 
Then, there is a radius $r_0<p^\frac{p-2}{p-1}$ lying in $p^\mathbb{Q}$ 
and analytic functions $a_n\in{\OOO_{\Xf }}_{/\K}$, indexed by $n\in\N$, such that the following statements hold: 
\begin{enumerate}[label=\textup{(}\arabic*\textup{)}]
\item 
For all integers $k \in \Xf (\Q_p )$ satisfying 
$k> \alpha+1$, the series
\[
\displaystyle{\sum^\infty_{n= 1}} a_n(k)q^n \in \K [\![q]\!]
\]
coincides with the $q$-expansion of a classical normalized cuspidal eigenform of level $Np$, weight $k$, slope $\alpha$ and character $\varepsilon_N\omega^{i-k}$. \label{pt:thmColeman:Fourier}
\item 
The series $\displaystyle{\sum^\infty_{n= 1}} a_n(k_0 )q^n \in \K [\![q]\!]$ coincides with the $q$-expansion of $f$.\label{pt:thmColeman:centre}
\item The space $\Xf$ is $a_p$-small in the sense of \cite[(\textbf{5.2})]{Kis03}.\label{pt:thmColeman:apsmall}
\end{enumerate}
\end{theorem}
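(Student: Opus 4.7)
The plan is to deduce the theorem from Coleman's construction of families of overconvergent $p$-adic modular forms carried out in~\cite{Col97}. I would work with a Banach module of overconvergent $p$-adic cuspforms in a rigid-analytic family indexed by a disk in the weight space, decompose it along the slopes of the completely continuous $U_p$-operator, and use the étaleness of the eigencurve at $f$ to obtain the interpolating Fourier coefficients $a_n$.

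First, for an affinoid disk $\Xfr\subseteq\W$ around $k_0$ with $r$ sufficiently small, Coleman constructs a Banach ${\R_{\Xfr}}_{/\K}$-module of overconvergent cuspforms of tame level $N$ and Nebentypus $\varepsilon_N\omega^i$ varying in the weight direction, equipped with a completely continuous $U_p$-action whose specialization at any accessible weight $k\in\Xfr(\Q_p)$ recovers the classical space of $r$-overconvergent cuspforms of weight $k$ and the appropriate Nebentypus. Coleman's Riesz-type slope decomposition, the family version of Serre's theory of compact operators, produces after shrinking $r$ a direct summand which is finitely generated projective over ${\R_{\Xfr}}_{/\K}$ and on which $U_p$ acts with slopes bounded by $\alpha$. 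The Hecke algebra generated by $U_p$ and by the $T_\ell$ with $\ell\nmid Np$ acts on this summand, and the Hecke eigensystem of $f$ singles out a maximal ideal $\mathfrak m_f$.

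Second, I would exploit the assumptions to argue that the eigencurve is étale over the weight space at the point attached to $f$. The small-slope condition $\alpha<k_0-1$ is Coleman's classicality criterion, ensuring that classical forms of the same tame level and slope as $f$ interpolate through $f$; when $i=0$, the supplementary hypothesis $a^2\neq\varepsilon_N(p)p^{k_0-1}$ excludes the degeneracy of a double root of the Hecke polynomial at $p$, which is precisely the case that can cause ramification of the weight map. After passing to a radius $r_0\leq r$, this étaleness provides a rigid-analytic section of the weight map, producing analytic functions $a_n\in{\OOO_{\Xf}}_{/\K}$ whose specialization at each $k\in\Xf(\Q_p)$ is the $n$-th Fourier coefficient of the unique classical form of weight $k$, tame level $N$, Nebentypus $\varepsilon_N\omega^{i-k}$ and slope $\alpha$ lifting the eigensystem at $\mathfrak m_f$. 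Assertions~\ref{pt:thmColeman:Fourier} and~\ref{pt:thmColeman:centre} then follow.

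Third, for assertion~\ref{pt:thmColeman:apsmall}, Kisin's $a_p$-smallness as defined in~\cite{Kis03} is a uniform bound on $v_p(a_p(k))$ over $\Xf$; since $a_p\in{\OOO_{\Xf}}_{/\K}$ is analytic with $v_p(a_p(k_0))=\alpha$, continuity on the compact affinoid disk $\Xf$ together with a final shrinking of $r_0$ secures the required bound. The hard part is the étaleness statement at $f$, which is the deep input from Coleman's classicality theorem and the analysis of the local geometry of the eigencurve at a classical point: the two hypotheses in the theorem are designed precisely to place us in the étale locus of the weight map, and once this is accepted the remainder of the argument is essentially formal.
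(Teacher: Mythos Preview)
Your approach is essentially the same as the paper's: both deduce the theorem from Coleman's results in~\cite{Col97}, specifically from Corollary~B5.7.1 there, and then shrink the radius to obtain $a_p$-smallness via~\cite[\S5.2]{Kis03}. The paper's proof is terser---it simply cites Coleman's corollary rather than sketching the Banach-module and slope-decomposition machinery as you do---but the underlying content is identical.

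There is, however, one point the paper makes explicit that you gloss over. You assert that the section of the weight map yields $a_n\in{\OOO_{\Xf}}_{/\K}$, i.e.\ that the Fourier coefficients are \emph{power-bounded}, but you do not justify this; Coleman's corollary as stated only gives $a_n\in{\R_{\Xf}}_{/\K}$. The paper fills this gap by observing that the family is normalized ($a_1=T(1)=1$ by the construction in Theorem~B5.7) and then invoking~\cite[Lemma~B5.3]{Col97}, which says that the Hecke eigenvalues of a normalized overconvergent cuspidal eigenform have absolute value at most~$1$. You should add this step; without it, the claim $a_n\in{\OOO_{\Xf}}_{/\K}$ is unsupported.
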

\begin{remark} At first glance, it might look better to write $a_n^{(r_0)}$ for the analytic functions appearing in the statement, since the radius $r_0$ is not uniquely associated to $f$ and these functions might \emph{a priori} depend on its choice. 
However, we will prove in Corollary \ref{a_l^r} below that this is not necessary.
\end{remark}
\begin{remark}
In this paper, the property that $\Xf$ be $a_p$-small will not be used. Nevertheless, point~\ref{pt:thmColeman:apsmall} of Theorem~\ref{Stevens} is essential when dealing with interpolation of crystalline periods along the Coleman family, adapting techniques from~\cite{Kis03}.
\end{remark}
\begin{proof} 
In \cite[pp.~465--467]{Col97} (and especially along the proof of Corollary B5.7.1 \emph{ibidem}), 
Coleman attached to the $f$ chosen above the space $\Xf$ with a radius $r_0 \in p^\mathbb{Q}$ small enough. 
In \cite[pp.~465--467]{Col97}, this space is denoted simply by $B$ and a crucial step is
to take a finite \'etale affinoid algebra $\Rf$ over $\R_{\Xf}$ whose associated affinoid space parametrizes families of $p'$-new forms 
of slope $\alpha$ in the sense of \cite[p.~467, Definition]{Col97}. 
As in \cite{Col97}, we write $X(\Rf)\to \Xf$ for the affinoid space associated to $\Rf$. 
 
Then the statements \ref{pt:thmColeman:Fourier}--\ref{pt:thmColeman:apsmall} are in \cite[Corollary~B5.7.1]{Col97} \emph{verbatim}, except for the condition that all $a_n$ be power-bounded and that all forms above be normalized. 
In Lemma B5.3 \emph{ibidem} it is shown that the Hecke eigenvalues of an overconvergent cuspidal eigenform are bounded by $1$ if the form is normalized. We thus deduce the result by observing that $a_1=1$, which follows from $a_1=T(1)=1$ by the construction given in Theorem B5.7 \emph{ibidem}. As discussed in \cite[\S 5.2]{Kis03} it is always possible to shrink a disk around a point in order to get a smaller one which is $a_p$-small, and we define this smaller radius as the constant $r_0$ of the statement.
\end{proof}
\begin{defn}\label{def:families} Let $f$ be a form as in Theorem \ref{Stevens} and let $0<r\leq r_0$ be smaller than or equal to the radius constructed \emph{ibid}. 
The collection of the rigid functions $\{a_n\}_{n\in \mathbb{N}}$ on $\Xfr$ is called the Coleman family of slope $\alpha$ and radius $r$ passing through the form $f$. 
The formal power series
\[
\sum^\infty_{n=1}a_n q^n\in {\OOO_{\Xfr}}_{/\K}[\![q]\!]
\]
is said to be the Fourier expansion of the Coleman family. For each $x\in \Xfr$, we denote by $f_x$ the overconvergent modular form whose expansion is $\sum a_n(x)q^n$. Sometimes, we also refer to the collection of all these forms as the Coleman family of slope $\alpha$ through the form $f$.
\end{defn}

Consider the subset $\Zfr$ of accessible weight-characters in $\Xfr$ defined as
\begin{equation}\label{equation:definitionZfr}
\Zfr =\Big(\{\varepsilon_N\omega^i\}\times\Z_{>\alpha+1}\Big)\cap\Xfr \subseteq \W^*(\Q_p ) .
\end{equation}
By an abuse of notation, we write $k$ for elements $(\varepsilon_N\omega^i,k)$ 
of $\Zfr$ and we denote the $p$-adic Deligne representation attached to $f_k$ in \cite{Del68} by $\rho_k$.

Consider now a form $f$ of weight $k_0$ satisfying the assumption of Theorem \ref{Stevens} and a radius $r_0$ as constructed \emph{ibid.} 

\begin{corollary}\label{a_l^r} 
Let $0<r<r_0$ be an element of $p^{\mathbb{Q}}$. For $n\geq 1$ the functions
\[
\mathrm{res}^{\Xf}_{\Xfr}(a_n ) \in {\OOO_{\Xfr}}_{/\K}
\]
are the Fourier expansion of a Coleman family of slope $\alpha$ and radius $r$ passing through the form $f$.
\end{corollary}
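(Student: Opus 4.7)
The plan is to check that the restricted functions $\mathrm{res}^{\Xf}_{\Xfr}(a_n)$ satisfy properties~\ref{pt:thmColeman:Fourier}--\ref{pt:thmColeman:apsmall} of Theorem~\ref{Stevens} with the smaller radius $r$ and then conclude by a uniqueness argument based on Zariski density. Since restriction of rigid analytic functions preserves power-boundedness, each $\mathrm{res}^{\Xf}_{\Xfr}(a_n)$ lies in ${\OOO_{\Xfr}}_{/\K}$, so the collection defines a priori a candidate family on $\Xfr$.

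First, I would verify the interpolation property. Any $k\in\Zfr$ automatically lies in $\Zf$, and by definition of restriction $(\mathrm{res}^{\Xf}_{\Xfr}(a_n))(k)=a_n(k)$. Applying Theorem~\ref{Stevens}\ref{pt:thmColeman:Fourier} to the original family on $\Xf$, the series $\sum a_n(k)q^n$ is then the $q$-expansion of a classical normalized cuspidal eigenform of level $Np$, weight $k$, slope $\alpha$ and character $\varepsilon_N\omega^{i-k}$, which gives property~\ref{pt:thmColeman:Fourier} on $\Xfr$. Similarly, $k_0\in\Xfr(\Q_p)$, so evaluating the restrictions at $k_0$ yields the same $q$-expansion as $f$, giving property~\ref{pt:thmColeman:centre}. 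Property~\ref{pt:thmColeman:apsmall} follows from the containment $\Xfr\subseteq\Xf$: the $a_p$-smallness condition of~\cite[\S5.2]{Kis03} is a bound on the sup-norm of $a_p$ over the affinoid, and restricting to a subdomain can only decrease this norm, so the condition is inherited.

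Finally, I would invoke uniqueness to identify the restrictions with \emph{the} Coleman family of slope $\alpha$ and radius $r$ through $f$ in the sense of Definition~\ref{def:families}. If $\{b_n\}_{n\in\N}$ denotes another family on $\Xfr$ satisfying properties~\ref{pt:thmColeman:Fourier}--\ref{pt:thmColeman:apsmall}, then $\mathrm{res}^{\Xf}_{\Xfr}(a_n)$ and $b_n$ coincide at every point of $\Zfr$ because the Hecke eigenvalues of a classical eigenform of given level, weight, slope and character are determined by the form, and both specializations yield the same classical form. Since $\Xfr$ is reduced and $\Zfr$ is an infinite subset of $B[k_0,r]_{\Q_p}\subseteq B[k_0,1]_{\Q_p}$, Lemma~\ref{basic:Zariski-dense}(2) shows that $\Zfr$ is Zariski-dense in $\Xfr$, so part~(1) of the same lemma forces $\mathrm{res}^{\Xf}_{\Xfr}(a_n)=b_n$ on $\Xfr$ for every $n\geq 1$.

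The only mildly delicate step is checking that $a_p$-smallness passes to the smaller disk; once the definition in~\cite{Kis03} is unpacked this is routine, but it is the only place where one must do more than invoke Theorem~\ref{Stevens} directly. The rest reduces to the general principle that a Coleman family is rigidly determined by its classical specializations together with the Zariski density of $\Zfr$.
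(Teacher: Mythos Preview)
Your verification that the restricted functions satisfy properties~\ref{pt:thmColeman:Fourier}--\ref{pt:thmColeman:apsmall} is correct and is essentially what the paper does; the paper's own proof is terser, recording only property~\ref{pt:thmColeman:Fourier} explicitly and treating the rest as immediate from the definitions.

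The uniqueness paragraph you add, however, goes beyond the statement and contains a gap. You assert that if $\{b_n\}$ is another family on $\Xfr$ satisfying \ref{pt:thmColeman:Fourier}--\ref{pt:thmColeman:apsmall} then ``both specializations yield the same classical form'' at every $k\in\Zfr$. Properties \ref{pt:thmColeman:Fourier}--\ref{pt:thmColeman:apsmall} do not force this: property~\ref{pt:thmColeman:Fourier} only says that the specialization at weight $k$ is \emph{some} normalized cuspidal eigenform of the prescribed level, weight, slope and character, and there may be several such eigenforms when the slope-$\alpha$ subspace in weight $k$ has dimension larger than one. Only at $k=k_0$ does property~\ref{pt:thmColeman:centre} pin down the form. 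So from the abstract axioms alone you cannot conclude $a_n(k)=b_n(k)$ for $k\neq k_0$, and the Zariski-density argument does not get off the ground. The genuine uniqueness underlying the remark after Theorem~\ref{Stevens} comes from Coleman's construction itself (the map from the relevant piece of the eigencurve to the weight disk is \'{e}tale near $f$), not from \ref{pt:thmColeman:Fourier}--\ref{pt:thmColeman:apsmall} as a list of properties. Since the corollary as stated uses the indefinite article ``a Coleman family'', your argument for the corollary proper stands; simply drop the final paragraph, or replace it by an appeal to the \'{e}taleness in Coleman's construction if you want to justify the remark as well.
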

\begin{proof} This follows from the definitions, since for every $k\in \ZFfr (\Q_p )$ the series
\[
\sum^\infty_{n=1}\mathrm{res}^{\Xf}_{\Xfr}(a_n)(k)q^n
\]
is the $q$-expansion of a form of the required type.
\end{proof}
Thanks to the above corollary, we can unambiguously speak about the Fourier coefficients $a_n$  of a Coleman family 
without referring to the radius; observe also that 
 shrinking an $a_p$-small disk to one of smaller radius preserves $a_p$-smallness. We recall the following lemma:
\begin{lemma}\label{basic:lemma} 
Let $r\in p^{\mathbb{Q}}$ be a radius satisfying 
$r\leq r_0$. 
If the function $G \in { \R_{\Xfr}}_{/\K}$ vanishes on $\ZFf$, 
it is everywhere zero. 
\end{lemma}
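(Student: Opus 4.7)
The plan is to deduce the statement directly from Lemma~\ref{basic:Zariski-dense}. The key structural observation is that, as a rigid analytic space over $\K$, the affinoid $\Xfr = \{\varepsilon_N\omega^i\}\times\B[k_0,r]$ is (a copy of) the closed disk $\B[k_0,r]_\K$, since the Nebentypus factor is a single point of $\dualD$. In particular $\Xfr$ is reduced and ${\R_{\Xfr}}_{/\K}$ is the ring of restricted power series on $\B[k_0,r]_\K$.

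First I would verify that the set $\Zfr=\ZFf\cap\Xfr$ of integer weights landing in $\Xfr$ is infinite. Since $k_0$ is a positive integer and $r\in p^{\Q}$ is positive, one can pick $n\in\N$ with $p^{-n}\leq r$: every element of the arithmetic progression $k_0+p^n\Z$ then lies in $\B[k_0,r]$, and infinitely many such elements satisfy $k>\alpha+1$. Hence $\Zfr$ is infinite.

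Next I would invoke Lemma~\ref{basic:Zariski-dense}(2): although it is phrased for the unit disk, its proof uses only Weierstra\ss{} preparation on $\R_{\B[x_0,1]_K}$, and preparation is equally valid for any closed disk $\B[k_0,r]_\K$ with $r\in p^{\Q}$ (alternatively, after a base change to a finite extension of $\K$ containing a suitable power of a uniformiser, one rescales to the unit disk and descends). Thus every infinite subset of $\B[k_0,r]_\K(\K)$ — in particular $\Zfr$ — is Zariski-dense in $\Xfr$. Since $G$ vanishes on $\ZFf$, it in particular vanishes on the subset $\Zfr=\ZFf\cap\Xfr\subseteq\Xfr(\K)$, and the reducedness of $\Xfr$ combined with Lemma~\ref{basic:Zariski-dense}(1) forces $G=0$.

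I do not foresee any real obstacle: this is essentially a direct application of the preparatory lemma. The only mildly subtle point is the extension of Lemma~\ref{basic:Zariski-dense}(2) from radius $1$ to the radius $r$, which as indicated above is immediate either by redoing the Weierstra\ss{} argument on $\B[k_0,r]_\K$ or by a rescaling.
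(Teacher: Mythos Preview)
Your proposal is correct and follows essentially the same route as the paper: both observe that $\Zfr\subseteq\ZFf$ is an infinite subset of the closed disk $\Xfr$, invoke Lemma~\ref{basic:Zariski-dense}(2) to get Zariski-density, and then apply Lemma~\ref{basic:Zariski-dense}(1). The paper's proof is terser and does not pause over the infinitude of $\Zfr$ or the passage from the unit disk to radius~$r$, but your added justification for these points is entirely in order.
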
 
\begin{proof}  
The subset $\ZFfr \subseteq \Xfr (\Q_p )$ is Zariski-dense in $\Xfr$ thanks to 
the second assertion of Lemma \ref{basic:Zariski-dense}. Hence, 
the assertion is an immediate consequence of the first assertion of 
Lemma \ref{basic:Zariski-dense}. 
\end{proof}

\section{\texorpdfstring{$p$-adic family of Galois representations}{}}\label{sec:biggaloisrep}
The main result of this section is Theorem \ref{BigGaloisRep} which produces an \emph{integral} Galois representation with values in ${\OOO_\X}_{/\K}$ -- for 
a given Coleman family and a suitable affinoid $\X$ -- that specializes 
to the Deligne representations attached to the classical eigenforms which belong to 
the given Coleman family. 
The most important ingredient is to construct pseudo-representations associated 
to a given Coleman family: once we have a pseudo-representation over our affinoid algebras, a standard argument allows us to recover a Galois representation from this 
pseudo-representation. 
Since we are not aware of a reference on constructing pseudo-representations over affinoid algebras, we construct them here by using a formal structure of Coleman families. 
  
 Let us fix a Coleman family as in Theorem \ref{Stevens} and  
let $r_0 \in p^{\mathbb{Q}}$ be the radius constructed \emph{ibid.}
Let $K$ be a complete subfield of $\C$ and take an element $e_0 \in K$ such that 
$r_0 = \vert e_0 \vert$.  Define $\Ao_K$ to be the complete local ring
\begin{equation}\label{equation:A0}
\Ao_{K}=\mathcal{O}_{K}\Big[\!\!\Big[ \frac{T-k_0 }{e_0} \Big]\!\!\Big]=
\Big\{\sum^\infty_{i=0}c_i 
\left( \frac{T-k_0 }{e_0 }\right)^i \ \Big\vert \ c_i\in\mathcal{O}_{K}\Big\}.
\end{equation}
For any $r \in p^{\mathbb{Q}}$ with $r < r_0$, a power series $F$ in $\Ao_{K}$ converges on $B[k_0 ,r]_K \subsetneq B(k_0 ,r_0 )_K$ 
and its evaluations at each point $x \in B[k_0 ,r]_K$ satisfies $\vert F(x) \vert \leq 1$. 
We consider elements of $\Ao_K$ as functions on $\Xfr$, and this induces, by restriction, a ring homomorphism $\Ao_{K} 
\xrightarrow{\operatorname{res}} \OOO_{\Xfr _K}$. For each radius $r<r_0$, we choose $e_r \in \C$ such that 
$\vert e_r \vert = r$. By \cite[\S 6.1.5]{BosGunRem84}, there is an isomorphism
\[
\OOO_{\Xfr _{\C}}\cong 
 \Big\{\sum^\infty_{i=0}c_i \left( \frac{T-k_0}{e_r }\right)^i \in 
 \mathcal{O}_{\C}\Big[\!\!\Big[ \frac{T-k_0}{e_r} \Big]\!\!\Big]
 \ \Big\vert \ \lim_{i\to\infty}\vert c_i\vert =0\Big\}. 
\]
Similarly as above, the inclusion 
$\{\varepsilon_N\omega^i\}\times\B(k_0,r_0 )_K \subset \Xf_K$ induces a homomorphism $\OOO_{\Xf _K}\rightarrow \Ao_{K}$ 
where we have a presentation
\[
\OOO_{\Xf _K}\cong 
 \Big\{\sum^\infty_{i=0}c_i \left( \frac{T-k_0}{e_0 }\right)^i \in \mathcal{O}_{K}\Big[\!\!\Big[ \frac{T-k_0}{e_0} \Big]\!\!\Big]
 \ \Big\vert \ \lim_{i\to\infty}\vert c_i\vert =0\Big\}. 
\]
The composition
\begin{equation}\label{A_X to A to A_x}
\OOO_{\Xf _K}\longrightarrow \Ao_{K} \overset{\operatorname{res}}{\hookrightarrow} \OOO_{\Xfr _{K}}
\end{equation}
corresponds to the inclusions
\[
\B[k_0,r]_{\C} \subset \B(k_0,r_0)_K \subset \B[k_0,r_0]_K .
\]
In Figure~\ref{fig:radii}, there is a sketch of the radii that have occurred so far in our construction.

\begin{figure}
\xy
<22.8em,-7em>*+{\bullet}; 
<23.25em,-7.75em>*+{1}; 
<20.9em,-7em>*+{\bullet}; 
<21.45em,-7.75em>*+{r_0}; 
<18.03em,-7em>*+{\bullet} ; 
<18.53em,-7.75em>*+{r}; 
<-1em,-7em>*+{}; 
<25em,-7em>*+{} **@{-}; 
<-5em,-7em>*+{}; 
<29em,-7em>*+{} **@{--}; 
<11.75em,-7em>*+{\bullet}; 
<11.75em,-8em>*+{k_0}; 
*\xycircle(42.275,42.275){}
*\xycircle(35,35){}
*\xycircle(34.5,34.5){--}
*\xycircle(24.15,24.15){}
\endxy
\caption{}\label{fig:radii}
\end{figure}
When the radius $r \in p^{\mathbb{Q}}$ tends to $r_0$ from below, we have 
\[
\Ao_{K} \subset \Ao_{\C} = \varprojlim_{\begin{smallmatrix}
r\rightarrow r_0 \\ r<r_0
\end{smallmatrix}} \OOO_{\Xfr_{\C}} = 
\underset{r < r_0}{\bigcap } \OOO_{\Xfr_{\C}}. 
\]
From now on, we denote 
$\Ao_{\Q_p}$ by $\Ao$ when $K=\Q_p$ dropping the subscript $K$. We summarize the above observations
in the following proposition: 
\begin{proposition}\label{prop:Stevens2} 
Maintain the same assumptions and notations of Theorem \ref{Stevens}. There exists a finite extension $\K$ of $\Q_p$ such that, 
for every $n\in\N$, there is a unique function $A_n \in 
\AoK$ such that the image of $A_n$ via the restriction
$\AoK \hookrightarrow {\OOO_{\Xfr }}_{/\K}$ coincides with the function $a_n$ obtained in 
Theorem \ref{Stevens} on a smaller radius $r$. 
\end{proposition}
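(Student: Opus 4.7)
The plan is to observe that each $a_n$, viewed as a formal power series in $\frac{T-k_0}{e_0}$, already has its coefficients in $\mathcal{O}_\K$, so it automatically defines an element of the larger ring $\AoK$. The statement is essentially the bookkeeping observation that the Fourier coefficients of the Coleman family fit naturally in the complete local ring $\AoK$ (where the decay condition on the coefficients is dropped) rather than only in the strict affinoid ring ${\OOO_{\Xf}}_{/\K}$, and the proof amounts to assembling the right diagram of rings.

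\textbf{Existence and compatibility.} I take $\K$ to be the finite extension of $\Q_p$ supplied by Theorem~\ref{Stevens}. By the presentation of ${\OOO_{\Xf}}_{/\K}$ recalled just before the statement, each $a_n$ admits a unique expansion
\[
a_n \;=\; \sum_{i=0}^{\infty} c_i^{(n)}\left(\frac{T-k_0}{e_0}\right)^{\!i}, \qquad c_i^{(n)}\in\mathcal{O}_\K,\quad \lim_{i\to\infty}\vert c_i^{(n)}\vert = 0 .
\]
Since the defining condition for $\AoK$ requires only $c_i\in\mathcal{O}_\K$ with no decay, the very same series defines an element $A_n\in\AoK$; equivalently, $A_n$ is the image of $a_n$ under the natural map ${\OOO_{\Xf}}_{/\K}\to\AoK$ induced by the inclusion of rigid spaces $\{\varepsilon_N\omega^i\}\times\B(k_0,r_0)_\K\subset\Xf_\K$. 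Inserting this into the factorisation~\eqref{A_X to A to A_x}, the image of $A_n$ under $\AoK\xrightarrow{\operatorname{res}}{\OOO_{\Xfr}}_{/\K}$ coincides with $\mathrm{res}^{\Xf}_{\Xfr}(a_n)$, which by Corollary~\ref{a_l^r} is the $n$-th Fourier coefficient of the Coleman family on the smaller disk $\Xfr$.

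\textbf{Uniqueness.} It suffices to show that the restriction map $\AoK\hookrightarrow{\OOO_{\Xfr}}_{/\K}$ is injective. Under the change of variable $\frac{T-k_0}{e_0}=\frac{e_r}{e_0}\cdot\frac{T-k_0}{e_r}$, any power series $F=\sum c_i\bigl(\frac{T-k_0}{e_0}\bigr)^i\in\AoK$ is sent to the restricted power series $\sum c_i(e_r/e_0)^i\bigl(\frac{T-k_0}{e_r}\bigr)^i\in{\OOO_{\Xfr}}_{/\K}$; since $e_r/e_0$ is a non-zero scalar, the assignment $c_i\mapsto c_i(e_r/e_0)^i$ is injective on $\mathcal{O}_\K$, and $F\neq 0$ in $\AoK$ forces a non-vanishing image. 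Hence $A_n$ is unique.

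The argument is essentially bookkeeping and has no serious obstacle; the only thing one has to verify is that the two power-series presentations of $\AoK$ and of ${\OOO_{\Xfr}}_{/\K}$ are compatible via the factorisation~\eqref{A_X to A to A_x}. The interest of the proposition is conceptual: it identifies the complete local $\mathcal{O}_\K$-algebra $\AoK$ as the natural integral formal model in which all the Fourier coefficients of the Coleman family live simultaneously, providing the algebraic framework for the Galois-representation and Iwasawa-theoretic constructions that follow in the rest of the paper.
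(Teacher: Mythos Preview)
Your proof is correct and follows essentially the same approach as the paper. Both arguments boil down to the observation that $a_n\in{\OOO_{\Xf}}_{/\K}$ already lands in $\AoK$ via the first arrow of the factorisation~\eqref{A_X to A to A_x}, and then restricts to the Fourier coefficient on $\Xfr$ via the second arrow; your write-up is in fact more explicit than the paper's, which gestures at the inverse-limit description $\Ao_{\C}=\bigcap_{r<r_0}\OOO_{\Xfr_{\C}}$ without spelling out the details. One small caveat in your uniqueness paragraph: the presentation of ${\OOO_{\Xfr}}_{/\K}$ in terms of $(T-k_0)/e_r$ is only given in the paper over $\C$, and $e_r$ need not lie in $\K$; you can sidestep this either by choosing $r$ so that $e_r\in\K$ (e.g.\ $e_r=e_0\varpi$) or by arguing injectivity after base change to $\C$, but the conclusion is unaffected.
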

\begin{proof}
In Theorem \ref{Stevens}, we obtained a Coleman family over $\Xf$. 
By shrinking such a Coleman family to $\Xfr$ for any $r \in p^{\mathbb{Q}}$ with $r < r_0$ and by taking 
the limit $r\longrightarrow r_0$, we showed in Corollary~\ref{a_l^r} that there exists $A_n \in \AoK$ 
such that the image of $A_n$ via the inclusion 
${\AoK} \hookrightarrow {\OOO_{\Xfr }}_{/\K}$ coincides with $a_n $ obtained in 
Theorem \ref{Stevens}. 
This completes the proof of the proposition. 
\end{proof}

Based on the existence of the formal structure ${\AoK}$, we introduce the notion of $2$-dimensional pseudo-representation based on \cite{Wil88}. We remark that other types of pseudo-representations were later introduced by Taylor in \cite{Tay91} and, much more recently, by Chenevier in \cite{Che11} but for our purposes Wiles' approach seems to be the best suited. 

Given a topological group $G$, a topological ring $R$ in which $2$ is invertible, and three functions $A,D\colon G\longrightarrow R$ and 
$\Xi\colon G\times G\longrightarrow R$, we consider the following four properties given in \cite[Lemma~2.2.3]{Wil88}: 
\begin{enumerate}[label=\textup{(\Roman*)}]
\item $A$, $D$ and $\Xi$ are continuous functions.\label{pt:pseudorep:cont}
\item The relations 
\begin{align*}
& A(\sigma  \tau) = A (\sigma ) A (\tau) + \Xi (\sigma ,\tau ), \\  
& D(\sigma  \tau) = D (\sigma ) D (\tau) + \Xi (\tau ,\sigma ),\\
& \Xi (\sigma \tau , \rho \gamma ) 
= A (\sigma ) A (\gamma) \Xi (\tau ,\rho )
+ A (\gamma ) D (\tau) \Xi (\sigma ,\rho ) 
+ A (\sigma ) D (\rho)  \Xi (\tau ,\gamma ) 
+ D (\tau ) D (\rho) \Xi (\sigma ,\gamma )  
\end{align*}
hold for all elements $\sigma , \tau , \gamma , \rho \in G$. \label{pt:pseudorep:prd}
\item The relations $A(\boldsymbol{1})=D(\boldsymbol{1})=1$ and $\Xi (\sigma ,\boldsymbol{1})=\Xi (\boldsymbol{1} ,\tau )=0$ hold
for all elements $\sigma , \tau  \in G$ where $\boldsymbol{1}$ is the unit element of $G$. \label{pt:pseudorep:id}
\item The relation $\Xi (\sigma ,\tau) \Xi (\rho ,\eta) = \Xi (\sigma ,\eta) \Xi (\rho ,\tau )$ holds for all elements $\sigma , \tau  , \rho 
,\eta \in G$. \label{pt:pseudorep:Xi}
\end{enumerate}
We call a triple $(A,D,\Xi)$ satisfying the above conditions \ref{pt:pseudorep:cont}--\ref{pt:pseudorep:Xi} a pseudo-representation.
 
Given a continuous representation $\rho\colon G\to \operatorname{GL}_2(R)$, by fixing a basis of $R^2$ we can write
$$
\rho(\sigma)=\left(\begin{array}{cc}a(\sigma)&b(\sigma)\\c(\sigma)&d(\sigma)\end{array}\right)\qquad\text{for }\sigma\in G\
$$
and the triple $\pi_\rho=\bigl(A(\sigma )=a(\sigma ),D(\sigma )=d(\sigma ),\Xi(\sigma,\tau)=b(\sigma)c(\tau)\bigr)$ 
is easily checked to be a pseudo-representation; observe that attaching a pseudo-representation to a representation depends on the choice of a basis. Extending the notation introduced in Definition \ref{def:families}, and in the set-up introduced \emph{ibid}, we simply denote by $\pi_k$ the pseudo-representation $\pi_{\rho_k}$ attached to $\rho_k$ in some chosen basis. 
  
Let $\Zf$ be the set of points inside $\W^\ast(\Q_p)$ defined as the intersection
\begin{equation}\label{equation:definitionZf}
\Zf =\B(k_0,r_0)(\Q_p) \cap \ZFf
\end{equation}
where $\ZFf$ is the set defined in~\eqref{equation:definitionZfr}. According to the convention established at the end of Section~\ref{sec:affinoids}, weights in $\Zf$ will be identified with integers and denoted by a single coordinate.
\begin{proposition}\label{Pseudo-Rep} 
Let $S$ be the finite set of primes of $\mathbb{Q}$ 
consisting of the primes $\{\ell:\ell\mid N\}$, $p$ and $\infty$, and denote by $G_{\mathbb{Q},S}$ 
the Galois group of the maximal extension of $\Q$ unramified outside $S$. 
Then, there exists a continuous pseudo-representation 
\[
\pi = (A,D,\Xi)\colon G_{\mathbb{Q},S}\longrightarrow {\AoK}\]
interpolating the pseudo-representations $\pi_k$ attached to members of the Coleman family of slope $\alpha$ through $f$. 
In other words, for each $k\in \Zf$, the evaluation $\mathrm{ev}_k\circ\pi=(\mathrm{ev}_k\circ A,\mathrm{ev}_k\circ D,\mathrm{ev}_k\circ \Xi)$ coincides with 
the pseudo-representation $\pi_k$.
\end{proposition}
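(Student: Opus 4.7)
My strategy is to produce a continuous trace $T \colon G_{\Q, S} \to \AoK$ and determinant $\delta \colon G_{\Q, S} \to \AoK^\times$ that interpolate $\operatorname{tr} \rho_k$ and $\det \rho_k$ for every $k \in \Zf$, and then recover the pseudo-representation $(A, D, \Xi)$ from these data via a variant of Wiles' construction driven by a single ``distinguishing'' element $\sigma_0$ whose image has distinct eigenvalues.

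For the trace, I declare $T(\operatorname{Frob}_\ell) := A_\ell \in \AoK$ for every prime $\ell \notin S$, with $A_\ell$ as in Proposition~\ref{prop:Stevens2}. Since $\operatorname{ev}_k(A_\ell) = a_\ell(f_k) = \operatorname{tr} \rho_k(\operatorname{Frob}_\ell)$ and each $\operatorname{tr} \rho_k$ admits a unique continuous extension to $G_{\Q, S}$ by Chebotarev, one deduces that the assignment $\operatorname{Frob}_\ell \mapsto A_\ell$ extends to a continuous function $T \colon G_{\Q, S} \to \AoK$: uniqueness comes from the Zariski-density of $\Zf$ (Lemma~\ref{basic:Zariski-dense}) combined with Lemma~\ref{basic:lemma}, while the fact that the target is genuinely $\AoK$ hinges on the power-boundedness of each $A_\ell$ together with the completeness of $\AoK$. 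For the determinant, using $\log$ and $\exp$ on $1 + p\Z_p$, I set
\[
\delta(\sigma) := \varepsilon_N(\sigma) \, \omega^{i-1}\!\bigl(\chi_\cyc(\sigma)\bigr) \, \exp\!\bigl((T-1) \log \langle\!\langle \chi_\cyc(\sigma) \rangle\!\rangle \bigr) \; \in \; \AoK,
\]
which satisfies $\operatorname{ev}_k \delta(\sigma) = \varepsilon_N(\sigma) \, \omega^{i-k}\!\bigl(\chi_\cyc(\sigma)\bigr) \, \chi_\cyc(\sigma)^{k-1} = \det \rho_k(\sigma)$ for every $k \in \Zf$.

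Irreducibility of $\rho_{k_0}$ lets me pick $\sigma_0 \in G_{\Q, S}$ such that the two eigenvalues of $\rho_{k_0}(\sigma_0)$ are distinct in the residue field, after enlarging $\K$ if needed. Hensel's lemma in the complete local ring $\AoK$ then produces $\alpha, \beta \in \AoK$ with $\alpha + \beta = T(\sigma_0)$, $\alpha\beta = \delta(\sigma_0)$, and $\alpha - \beta$ a unit. Imitating the algebra obtained by writing each $\rho_k$ in a basis that diagonalizes $\rho_k(\sigma_0)$, I define
\begin{align*}
A(\sigma) &:= \frac{T(\sigma \sigma_0) - \beta \, T(\sigma)}{\alpha - \beta}, \qquad D(\sigma) := \frac{\alpha \, T(\sigma) - T(\sigma \sigma_0)}{\alpha - \beta}, \\
\Xi(\sigma, \tau) &:= \frac{T(\tau \sigma_0 \sigma) - \beta \, T(\tau \sigma)}{\alpha - \beta} - A(\tau) \, A(\sigma).
\end{align*}
Continuity of $A$, $D$, $\Xi$ follows from that of $T$. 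Each of the axioms \ref{pt:pseudorep:cont}--\ref{pt:pseudorep:Xi} is a polynomial identity in finitely many values of $T$; after specialization at any $k \in \Zf$ it reduces to the corresponding identity for the classical pseudo-representation $\pi_k$ in the basis diagonalizing $\rho_k(\sigma_0)$, which is satisfied. By Zariski-density of $\Zf$ and Lemma~\ref{basic:lemma}, the identities hold in $\AoK$, and the same specialization argument yields $\mathrm{ev}_k \circ \pi = \pi_k$.

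\emph{Main obstacle.} The delicate point is the first step: constructing a $T$ whose target is genuinely $\AoK$ (and not merely some inverse limit or fraction field) and which is continuous with respect to the complete-local topology. One must interweave (i) the continuity of each $\operatorname{tr} \rho_k$, (ii) the uniform power-boundedness of the $A_\ell$'s, (iii) the Zariski-density of $\Zf$, and (iv) the completeness of $\AoK$, to guarantee both integrality and continuity of the interpolating trace. Once $T$ is in hand, the remainder is a formal adaptation of Wiles' pseudo-representation machinery.
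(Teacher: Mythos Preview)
Your approach is correct in spirit and parallels the paper's, but differs in two noteworthy ways.

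\textbf{Choice of distinguishing element.} The paper takes $\sigma_0$ to be complex conjugation $c$. Since $p$ is odd, $\rho_k(c)$ has eigenvalues $\pm 1$ in every specialization, so $\alpha=-1$, $\beta=1$ are constant units of $\AoK$ and no Hensel lift or determinant $\delta$ is needed. This yields the cleaner formulas
\[
A(\sigma)=\tfrac{1}{2}\bigl(T(\sigma)-T(c\sigma)\bigr),\qquad D(\sigma)=\tfrac{1}{2}\bigl(T(\sigma)+T(c\sigma)\bigr),
\]
and $\Xi$ is then built analogously. Your more general $\sigma_0$ works too, but note that your justification (``irreducibility of $\rho_{k_0}$'') is insufficient to guarantee distinct eigenvalues \emph{in the residue field}: irreducibility over $\K$ says nothing about the reduction. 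What actually saves you is oddness---and once you invoke that, you may as well take $\sigma_0=c$ and recover the paper's simpler construction.

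\textbf{Construction of the trace.} You correctly identify this as the crux, but your sketch (``power-boundedness of the $A_\ell$ together with completeness'') does not explain why a Cauchy net of Frobenii yields a Cauchy net of values in $\AoK$. The paper handles this by exploiting the identification
\[
\AoK=\varprojlim_s \AoK\big/\bigl(P_{k_1}\cap\cdots\cap P_{k_s}\bigr),
\]
building a compatible system of continuous traces $\mathrm{Tr}_{P_{k_1}\cap\cdots\cap P_{k_s}}\colon G_{\Q,S}\to \AoK/(P_{k_1}\cap\cdots\cap P_{k_s})$ by Chinese remainder (the $a_\ell(k_i)$ glue because they are values of a single function $A_\ell\in\AoK$), and passing to the limit. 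This is the concrete mechanism behind the interweaving of ingredients (i)--(iv) you list; without it, your first step remains a plan rather than a proof.

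Once $T$ is in hand, your verification of the axioms via Zariski-density of $\Zf$ and Lemma~\ref{basic:lemma} is exactly what the paper does.
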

\begin{proof} 
The argument relying on pseudo-representations of rank two \`{a} la Wiles is more or less standard and the proof goes in a parallel manner 
as the proof given in the monograph~\cite[\S 7.5]{Hid93}, so we only give an outline. 
 
Let us choose a complex conjugation $c \in G_{\mathbb{Q},S}$. 
For each $k\in \Zf$, we fix a basis for the representation $\rho_k$ 
so that $c$ is represented by the matrix 
$
\begin{pmatrix} -1 & 0 \\ 
0 & 1 \\ 
\end{pmatrix}
$. 
Following \cite[Lemma~2.3.3]{Wil88},  
we define functions
\[
A_{(k)},D_{(k)}\colon G_{\mathbb{Q},S}\longrightarrow \K \\
\]
by the matrix representation 
\[
\rho_k(g)=\left(\begin{array}{cc}
A_{(k)}(g)&B_{(k)}(g)\\
C_{(k)}(g)&D_{(k)}(g)
\end{array}\right)\;.
\]
We also set $\Xi_{(k)}\colon G_{\mathbb{Q},S}\times G_{\mathbb{Q},S}\longrightarrow \K$ by $\Xi_{(k)}(\gamma_1,\gamma_2)=A_{(k)}(\gamma_1\gamma_2)-A_{(k)}(\gamma_1)A_{(k)}(\gamma_2)$. 
The functions $A_{(k)},D_{(k)},\Xi_k$ are continuous for every $k\in \Zf$ 
thanks to continuity of the Deligne representation $\rho_k$.
  
For any $k\in \Zf$, we denote by $P_k \subset {\AoK}$ the kernel of the evaluation 
map $ {\AoK} \longrightarrow \K$ at $k$. 
Let us denote the function $A_{(k)} + D_{(k)}$ by $\mathrm{Tr}_{P_k}$. 
Since $\Zf$ is a countable set, we choose a numbering 
\[
\Zf = \{ k_1 , k_2 , \ldots , k_s ,\ldots \} . 
\] 
For $k_1,k_2 \in  \Zf$, consider the map  
\begin{equation}\label{equation:glueing}
{\AoK} /P_{k_1} \oplus {\AoK}/P_{k_2} \longrightarrow  {\AoK} /(P_{k_1} +P_{k_2}) ,\ (x,y) \mapsto 
(x\text{ mod $P_{k_2}$}) - (y\text{ mod $P_{k_1}$} )
\end{equation}
whose kernel is isomorphic to ${\AoK} /(P_{k_1 } \cap P_{k_2} )$.
Let $\ell$ be a prime number outside $S$. 
We have $\mathrm{Tr}_{P_k}(\operatorname{Frob}_\ell)=a_\ell (k)$ for every $k\in \Zf$ 
and the Fourier coefficients $a_\ell (k_i)$ glue together when $k_i$ varies. 
Hence the values $\mathrm{Tr}_{P_k}(\operatorname{Frob}_\ell)$ glue together 
when $k$ varies, which is true for any prime number $\ell$ outside $S$. 
By the sequence \eqref{equation:glueing}, we have a continuous function 
$\mathrm{Tr}_{P_{k_1} \cap P_{k_2}} \colon 
G_{\mathbb{Q},S} \longrightarrow {\AoK} /(P_{k_1} \cap P_{k_2})$ 
whose value $\mathrm{Tr}_{P_{k_1} \cap P_{k_2}}(\operatorname{Frob}_\ell)$ 
is congruent to $\mathrm{Tr}_{P_{k_1}}(\operatorname{Frob}_\ell)$ 
(resp.~$\mathrm{Tr}_{P_{k_2}}(\operatorname{Frob}_\ell)$) mod $P_{k_1}$ 
(resp.~mod $P_{k_2}$) for every prime number $\ell$ outside $S$. 
Since the set of Frobenius elements $\{\mathrm{Frob}_{\ell}\}_{\ell\notin S}$ is dense in $G_{\mathbb{Q},S}$, it follows that the 
value $\mathrm{Tr}_{P_{k_1} \cap P_{k_2}}(\sigma )$ 
is congruent to $\mathrm{Tr}_{P_{k_1}}(\sigma )$ 
(resp.~$\mathrm{Tr}_{P_{k_2}}(\sigma )$) mod $P_{k_1}$ 
(resp.~mod $P_{k_2}$) for every $\sigma \in G_{\mathbb{Q},S}$. 
 
By an inductive argument, for each natural number $s$, 
we have a continuous function 
\[
\mathrm{Tr}_{P_{k_1} \cap P_{k_2} \cap \ldots \cap P_{k_s}} \colon 
G_{\mathbb{Q},S} \longrightarrow {\AoK} /(P_{k_1} \cap P_{k_2} \cap \ldots \cap P_{k_s})
\] 
and reducing it $\pmod{P_{k_i}}$ we recover the function 
$\mathrm{Tr}_{P_{k_i}}$. Note that ${\AoK} = \varprojlim_s {\AoK} /(P_{k_1} \cap P_{k_2} \cap \ldots \cap P_{k_s})$ 
since ${\AoK}$ is complete and local. We can thus define the continuous function 
$\mathrm{Tr}\colon 
G_{\mathbb{Q},S} \longrightarrow {\AoK} $ to be $ \varprojlim_s \mathrm{Tr}_{P_{k_1} \cap P_{k_2} \cap \ldots \cap P_{k_s}}$. 
 
We define the desired functions $A$ and $D$ 
as
\begin{equation*}
A (\sigma )= 
\frac{\mathrm{Tr} \big(\sigma \big)-\mathrm{Tr}\big(c\cdot\sigma \big)}{2} 
\qquad\text{ and }\qquad
D (\sigma )= 
\frac{\mathrm{Tr} \big(\sigma \big)+\mathrm{Tr}\big(c\cdot\sigma \big)}{2}.
\end{equation*} 
Since ${\AoK}$ is complete and local, 
a similar argument yields the construction of $\Xi$, 
by first evaluating the value $b(g_1) c(g_2)$  at pairs 
$(g_1,g_2)=(\mathrm{Frob}_{\ell_1},\mathrm{Frob}_{\ell_2})$ 
and then observing that the set of these pairs is dense in $G_{\mathbb{Q},S}\times G_{\mathbb{Q},S}$. We thus obtain a continuous function 
\begin{equation*}
\Xi \colon G_{\mathbb{Q},S}\times G_{\mathbb{Q},S}\longrightarrow {\AoK}
\end{equation*}
that recovers the function $\Xi_{(k)}$ by taking the reduction modulo $P_k$ 
of the function $\Xi$ for every $k\in \Zf$. 
 
Setting $\pi:=(A,D,\Xi)$, we need to check that the triple satisfies properties~\ref{pt:pseudorep:prd}--\ref{pt:pseudorep:Xi} above, using the fact that $\Zf$ is dense in the reduced $\X$. First, let us verify property~\ref{pt:pseudorep:Xi}, namely that for each $g_1,g_2,h_1,h_2\in G_{\mathbb{Q},S}$, the following holds:
\begin{equation}\label{Wiles:(IV)}
\Xi(g_1,g_2)\Xi(h_1,h_2)-\Xi(g_1,h_2)\Xi(h_1,g_2)=0\;.
\end{equation}
We need to check that the function 
$\Xi(g_1,g_2)\Xi(h_1,h_2)-\Xi(g_1,h_2)\Xi(h_1,g_2)$ vanishes identically on $\X$. 
Since the aforementioned property~\ref{pt:pseudorep:Xi} for the pseudo-representation $\pi_k=(A_{(k)},D_{(k)},\Xi_k)$,
we have 
\begin{multline*}
\big( \Xi(g_1,g_2)\Xi(h_1,h_2)-\Xi(g_1,h_2) \Xi(h_1,g_2)\big) (k) 
\\
=\Xi_k(g_1,g_2)\Xi_k(h_1,h_2)-\Xi_k(g_1,h_2)\Xi_k(h_1,g_2)
\end{multline*}
at each point $k$ in the dense subset $\Zf$. This proves the desired vanishing of 
\eqref{Wiles:(IV)} and 
the same argument shows the other properties~\ref{pt:pseudorep:prd} and~\ref{pt:pseudorep:id}.  
This completes the proof. 
\end{proof}
Theorem \ref{BigGaloisRep} below, which is the main result of this section, shows that the pseudo-representation 
that we have just constructed comes from a true representation. 
\begin{theorem}\label{BigGaloisRep} 
Under the same assumptions and notations 
as in Theorem~\ref{Stevens} and in Proposition~\ref{Pseudo-Rep}, there exists a free ${\AoK}$-module $\TT$ of rank two with a continuous $\GS$-action such that the representation 
\[
\boldsymbol{\rho}\colon \GS\longrightarrow\mathrm{Aut}_{{\AoK}}(\TT )
\]
satisfies $\pi_{\boldsymbol{\rho}}=\pi$ (in a suitable basis). In particular, $\boldsymbol{\rho}$ modulo $P_k$ is isomorphic to a lattice in the space $V_k$ of $\rho_k$ for all $k\in \Zf$.
\end{theorem}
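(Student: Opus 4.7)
The plan is to reconstruct a genuine Galois representation from the pseudo-representation $\pi=(A,D,\Xi)$ of Proposition~\ref{Pseudo-Rep} by adapting Wiles' two-dimensional reconstruction (cf.~\cite[\S 2]{Wil88} and \cite[\S 7.5]{Hid93}) to the complete local ring $\AoK$. The crucial preliminary step is to locate an element $g_0\in\GS$ such that $\xi_0:=\Xi(g_0,g_0)$ is a unit of $\AoK$. Since $\AoK$ is local with residue field $\kappa$, this is equivalent to finding $g_0$ with $\bar{\Xi}(g_0,g_0)\neq 0$ in $\kappa$, where $\bar{\pi}$ denotes the reduction of $\pi$ modulo the maximal ideal $\mathfrak{m}$. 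Since $\bar{\pi}$ is the reduction of the pseudo-representation attached to the residual representation $\bar{\rho}$, absolute irreducibility of $\bar{\rho}$ (guaranteed for instance by that of a suitable $\rho_k$, provided the residual picture is under control) forces $\bar{\Xi}$ to be non-identically zero on $\GS\times\GS$, and such a $g_0$ exists.

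Having fixed $g_0$, define $\boldsymbol{\rho}\colon\GS\to M_2(\AoK)$ by the formula
\begin{equation*}
\boldsymbol{\rho}(g):=\begin{pmatrix} A(g) & \Xi(g,g_0) \\ \xi_0^{-1}\,\Xi(g_0,g) & D(g) \end{pmatrix}.
\end{equation*}
Applying axiom~\ref{pt:pseudorep:Xi} with $(\sigma,\tau,\rho,\eta)=(g,g_0,g_0,h)$ yields the factorization $\Xi(g,g_0)\,\Xi(g_0,h)=\xi_0\,\Xi(g,h)$, and combining this with the two product rules of axiom~\ref{pt:pseudorep:prd} shows that $\boldsymbol{\rho}(gh)=\boldsymbol{\rho}(g)\boldsymbol{\rho}(h)$ by a routine entry-by-entry computation: the diagonal entries follow directly from~\ref{pt:pseudorep:prd}, while the off-diagonal ones use the four-variable expansion of $\Xi(gh,g_0)$ and $\Xi(g_0,gh)$ provided by~\ref{pt:pseudorep:prd} combined with the normalisations of~\ref{pt:pseudorep:id}. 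Axiom~\ref{pt:pseudorep:id} also gives $\boldsymbol{\rho}(\boldsymbol{1})=I_2$, and continuity of $\boldsymbol{\rho}$ follows from that of $A$, $D$, $\Xi$ in axiom~\ref{pt:pseudorep:cont}. To see that $\boldsymbol{\rho}$ lands in $\mathrm{GL}_2(\AoK)$, one computes using~\ref{pt:pseudorep:Xi} that $\det\boldsymbol{\rho}(g)=A(g)D(g)-\Xi(g,g)$, whose reduction at every $k\in\Zf$ coincides with $\det\rho_k(g)$, a unit; Lemma~\ref{basic:lemma} together with the fact that $\AoK$ is local then forces $\det\boldsymbol{\rho}(g)\in\AoK^\times$. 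One then sets $\TT:=\AoK^2$ endowed with the $\GS$-action given by $\boldsymbol{\rho}$.

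For the specialisation assertion, the reduction $\boldsymbol{\rho}\bmod P_k$ is a continuous representation on $(\AoK/P_k)^2$ whose associated pseudo-representation, by construction, equals $\pi_k$. Tensoring with $\K$ and invoking the absolute irreducibility of $\rho_k$, a Brauer--Nesbitt / Carayol--Serre argument identifies $(\boldsymbol{\rho}\bmod P_k)\otimes\K$ with $\rho_k$, so that $\TT/P_k\TT$ is a $\GS$-stable lattice inside $V_k$. The main obstacle is the very first step: the subtlety is that ``$\Xi(g_0,g_0)$ nonzero at some classical $k$'' only guarantees $\Xi(g_0,g_0)\notin P_k$, whereas being a unit in $\AoK$ requires $\Xi(g_0,g_0)\notin\mathfrak{m}$, a strictly stronger condition because $P_k\subsetneq\mathfrak{m}$. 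Controlling this therefore requires genuinely \emph{residual} information about the Galois representation rather than merely generic information at classical points, which is precisely where the (absolute) irreducibility of $\bar{\rho}$ enters in an essential way.
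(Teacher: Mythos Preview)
Your argument rests on producing $g_0\in\GS$ with $\Xi(g_0,g_0)\in(\AoK)^\times$, and you correctly identify that this is equivalent to the nonvanishing of $\bar{\Xi}(g_0,g_0)$ in the residue field, which in turn needs the \emph{residual} representation $\bar{\rho}$ to be absolutely irreducible. The difficulty is that the theorem does not assume this: the hypotheses are only those of Theorem~\ref{Stevens} and Proposition~\ref{Pseudo-Rep}, and residual irreducibility (Hypothesis~\ref{hyp}) is introduced only later in the paper for the Selmer-group applications. Your parenthetical ``guaranteed for instance by that of a suitable $\rho_k$, provided the residual picture is under control'' is not an argument: irreducibility of $\rho_k$ over $\K$ (which is what Ribet gives) says nothing about irreducibility of its reduction mod~$\varpi$. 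So as written your proof establishes the theorem only under an extra hypothesis that is not available.

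The paper's proof is designed precisely to avoid this assumption. Instead of asking that some $\Xi(\sigma,\tau)$ be a unit in $\AoK$, one uses Ribet's irreducibility of $\rho_{k_0}$ to find $\sigma,\tau$ with $\Xi(\sigma,\tau)(k_0)\neq 0$ in $\K$, fixes such a pair with $\Xi(\sigma,\tau)(k_0)$ of minimal valuation $\mu$, and writes $\Xi(\sigma,\tau)=p^{\mu}V^{-1}$ with $V\in(\AoK)^\times$. The resulting matrix representation lands only in $\mathrm{GL}_2(\AoK\otimes\Q_p)$, not in $\mathrm{GL}_2(\AoK)$. One then builds by hand a $\GS$-stable finitely generated $\AoK$-submodule $\TT'\subset(\AoK\otimes\Q_p)^{\oplus 2}$ (using the ideal $\mathfrak{J}$ generated by all lower-left entries), and finally takes the reflexive hull $\TT=(\TT')^{**}$; since $\AoK$ is regular local of Krull dimension two, reflexive modules are free, which gives the desired rank-two free lattice. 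Your approach, when residual irreducibility \emph{is} assumed, is cleaner and bypasses the lattice-and-double-dual step entirely; but it does not prove the theorem as stated.
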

The following proof mainly relies on \cite[Lemma~2.2.3]{Wil88}, see also \cite[Proposition~1, \S 7.5]{Hid93}.
\begin{proof} Start with a radius $r_0$ as in Theorem \ref{Stevens} and set $r=r_0\vert\varpi\vert$, where $\varpi$ is a uniformizer of $\K$. Let $\pi=(A,D,\Xi)$ be the ${\AoK}$-valued pseudo-representation constructed in Proposition \ref{Pseudo-Rep}. There exists a pair of elements $\sigma,\tau\in\GS$ such that $\Xi(\sigma,\tau)(k_0) \neq 0$. 
If not, the diagonal Galois representation $g\mapsto A(g)(k_0)\oplus D(g)(k_0)$ would have the same trace and determinant as the representation 
$\rho_k$, contradicting Ribet's result~\cite[Theorem~2.3]{Rib77} stating that $\rho_k$ be irreducible 
(see~\cite[Lemma~2.2.3]{Wil88} or~\cite[Proposition~1.1]{Hid89}). 
From now on, let us fix a pair $\sigma,\tau$ such that $\Xi(\sigma,\tau)(k_0)\in \K$ is of minimal valuation, say $\mu\in\mathbb{Q}$. 
The element $\Xi(\sigma,\tau)$ can be decomposed as $\Xi(\sigma,\tau)=p^\mu V^{-1}$ with $V\in({\AoK})^\times$. As in \cite[Proposition~1]{Hid93}, we check that the map
\[
\boldsymbol{\rho}\colon g\mapsto\left(\begin{array}{cc}
A(g) & \Xi(g,\tau)Vp^{-\mu}\\\\
\Xi(\sigma,g)&D(g)
\end{array}\right)
\]
is multiplicative, sends $\boldsymbol{1}\in \GS$ to $\mathbf{Id}_2\in M_2({\AoK} )$ and takes values in $M_2({\AoK} \otimes \mathbb{Q}_p)$. Hence we have a continuous group homomorphism $\boldsymbol{\rho}\colon \GS\to\operatorname{GL}_2({\AoK} \otimes \mathbb{Q}_p)$. 
In order to produce a finitely generated ${\AoK}$-submodule of ${\AoK} \otimes \mathbb{Q}_p$ which is $\boldsymbol{\rho}$-stable, we follow the proof of continuity of \cite[Proposition~1.1]{Hid89}. Namely, define $\mathfrak{J}\subseteq {\AoK} $ to be the ideal generated by all lower-left entries $\Xi(\sigma,g)$ for $g\in\GS$ and let $\TT '$ be the 
${\AoK}$-submodule of $({\AoK} )^{\oplus 2}\subset ({\AoK} \otimes \mathbb{Q}_p )^{\oplus 2}$ generated by all vectors $(x,y)$ with 
$x\in {\AoK} ,y\in\mathfrak{J}$. 
The ${\AoK}$-module $\TT '$ is finitely generated since 
${\AoK}$ is noetherian. Also, $\TT '$ is $p$-torsion free since $\TT '$ is contained in the module $({\AoK} \otimes \mathbb{Q}_p )^{\oplus 2}$ which 
has no non-zero $p$-torsion element. Moreover, 
given $^t(x,y)\in \TT '$ and $g\in \GS$, we have 
\[
\boldsymbol{\rho}(g)\cdot\left(\begin{array}{c}
x \\ y
\end{array}\right)=
\left(\begin{array}{c}
A(g)x+\Xi(g,\tau)Vp^{-\mu}y\\
\Xi(\sigma,g)x+D(g)y
\end{array}\right)
\in \TT '.
\]
Indeed, the definition of $\mathfrak{J}$ ensures $\Xi(\sigma,g)x\in\mathfrak{J}$, 
and the definition of $\mu$ guarantees that the element $\Xi(g,\tau)Vp^{-\mu}$ belongs to ${\AoK}$. 
It follows that $\TT'$ is Galois stable. Finally, we have the equality
$\TT' \otimes\Q_p=(\Ao \otimes \mathbb{Q}_p )^{\oplus 2}$ because $\mathfrak{J}$ contains the element $\Xi(\sigma,\tau)=p^{-\mu}V$ which becomes a unit after inverting $p$.
Define $\TT $ to be the double dual $\mathrm{Hom}_{\AoK} (\mathrm{Hom}_{\AoK} (\TT' ,{\AoK}), {\AoK} )$. We have a $\GS$-equivariant ${\AoK}$-linear injection 
$\TT' \hookrightarrow \TT$ with finite cokernel. Since $\TT$ is a finitely generated reflexive module over a regular local ring ${\AoK}$ of Krull dimension 
two, $\TT$ is a free ${\AoK}$-module of finite rank. Since $\TT \otimes \Q_p$ is free of rank two over ${\AoK} \otimes \Q_p$, the rank of $\TT$ over ${\AoK}$ is two.
 
As for the interpolation property, recall that an irreducible representation of $\GS$ with values in a finite extension of $\Q_p$ is uniquely determined by its trace and determinant. 
Thus $\TT/P_k \TT$ is isomorphic to a lattice inside $V_k$.
\end{proof}
Note that $\iota \colon {\AoK} \hookrightarrow { \OOO_{\Xfr} }_{/\K}$ is continuous. 
In fact, ${\AoK}$ is a local ring with maximal ideal 
$(\frac{T-k_0}{e_0} ,\varpi)$ and ${\OOO_{\Xfr}}_{/\K}$ is endowed with the $\varpi$-adic topology, where $\varpi$ denotes as before a uniformizer of $\K$. To show that $\iota$ is continuous, we need to show that 
$\iota^{-1}\Big(\varpi {\OOO_{\Xfr}}_{/\K}\Big)$ contains the maximal ideal 
$(\frac{T-k_0 }{e_0},\varpi)$ of ${\AoK}$. 
In fact, this implies that $\iota^{-1}(\varpi^n {\OOO_{\Xfr}}_{/\K})$ contains 
$(\frac{T-k_0}{e_0} ,\varpi)^n$ for every $n$. 
The uniformizer $\varpi$ is clearly contained in 
$\iota^{-1}(\varpi { \OOO_{\Xfr}}_{/\K})$ and $\frac{T-k_0}{e_0}$ is also 
contained in $\iota^{-1}(\varpi {\OOO_{\Xfr}}_{/\K})$ since we have $\frac{T-k_0}{e_0} = 
\varpi \frac{T-k_0}{e_0 \varpi } \in \varpi {\OOO_{\Xfr}}_{/\K}$. 
By extending the coefficients of the result obtained over ${\AoK}$ in the Theorem~\ref{BigGaloisRep}  to ${\OOO_{\Xfr}}_{/\K}$, 
we obtain the following corollary:
\begin{corollary}\label{cor:BigGaloisRep}
Under the same assumptions and notations as in
Theorem~\ref{BigGaloisRep} , 
there exists a free ${\OOO_{\Xfr}}_{/\K}$-module $\T$ of rank two with a continuous $\GS$-action such that the representation
\[
\boldsymbol{\rho}\colon \GS\longrightarrow 
\mathrm{Aut}_{{\OOO_{\Xfr}}_{/\K}}(\T) 
\]
satisfies $\pi_{\boldsymbol{\rho}}=\pi$ $($in a suitable basis$)$. 
In particular, $\boldsymbol{\rho}$ modulo $\mathfrak{m}_k$ is isomorphic to $\rho_k$ for all $k\in \Zf$ where $\mathfrak{m}_k$ is the unique maximal ideal of 
${\OOO_{\Xfr}}_{/\K}$ such that $\mathfrak{m}_k \cap {\AoK} =P_k$.
\end{corollary}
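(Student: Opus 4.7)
The natural strategy is to base change from $\AoK$ to ${\OOO_{\Xfr}}_{/\K}$ via the continuous embedding $\iota$ discussed in the paragraph preceding the statement. Concretely, I would set
\[
\T := \TT \otimes_{\AoK} {\OOO_{\Xfr}}_{/\K},
\]
viewed as an ${\OOO_{\Xfr}}_{/\K}$-module through the right factor, with $\GS$ acting diagonally by $g\cdot(t\otimes a):=(g\cdot t)\otimes a$. Since Theorem~\ref{BigGaloisRep} gives that $\TT$ is free of rank two over $\AoK$, this base change immediately produces a free ${\OOO_{\Xfr}}_{/\K}$-module of rank two, and the Galois action inherits linearity over ${\OOO_{\Xfr}}_{/\K}$ by construction.

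For continuity, the representation $\boldsymbol{\rho}\colon\GS\to \mathrm{GL}_2(\AoK)$ attached to some $\AoK$-basis of $\TT$ is continuous by Theorem~\ref{BigGaloisRep}. Composing with the induced map $\mathrm{GL}_2(\iota)\colon\mathrm{GL}_2(\AoK)\to \mathrm{GL}_2({\OOO_{\Xfr}}_{/\K})$, which is continuous precisely because $\iota$ has been shown to be continuous in the preceding paragraph, yields a continuous representation on $\T$. In the same $\AoK$-basis of $\TT$, regarded now as an ${\OOO_{\Xfr}}_{/\K}$-basis of $\T$, the matrix entries of the action on $\T$ are simply the $\iota$-images of the matrix entries of the action on $\TT$, so the pseudo-representation $\pi_{\boldsymbol{\rho}}$ coincides with the composition $\iota\circ\pi$ and therefore agrees with $\pi$ as stated.

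For the specialization property, fix $k\in\Zf$ for which the maximal ideal $\mathfrak{m}_k$ of the statement exists (i.e. $|k-k_0|\leq r$, so that evaluation of functions in ${\OOO_{\Xfr}}_{/\K}$ at $k$ is well-defined and lands in $\OK$). Evaluation at $k$ identifies ${\OOO_{\Xfr}}_{/\K}/\mathfrak{m}_k$ with $\OK$, and its restriction to $\AoK$ through $\iota$ is precisely the evaluation map whose kernel is $P_k$; this simultaneously proves the relation $\mathfrak{m}_k\cap\AoK=P_k$ and the uniqueness of $\mathfrak{m}_k$ with this property (since the fibre of $\operatorname{Spm}{\OOO_{\Xfr}}_{/\K}\to\operatorname{Spec}\AoK$ above $P_k$ reduces to the single $\Q_p$-point~$k$). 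Since $\AoK/P_k\cong\OK$, the base change formula then gives
\[
\T/\mathfrak{m}_k\T \;=\; \TT\otimes_{\AoK}\bigl({\OOO_{\Xfr}}_{/\K}/\mathfrak{m}_k\bigr) \;=\; \bigl(\TT/P_k\TT\bigr)\otimes_{\OK}\OK \;=\; \TT/P_k\TT,
\]
and the right-hand side is a lattice in $V_k$ by Theorem~\ref{BigGaloisRep}. The $\K$-vector space obtained after inverting~$p$ is therefore isomorphic to $V_k=\rho_k$, which is the claimed isomorphism.

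The only step requiring any care is the identification $\mathfrak{m}_k\cap\AoK=P_k$ together with its uniqueness; the rest of the argument is a formal consequence of the flat base change $\AoK\to{\OOO_{\Xfr}}_{/\K}$ and of the continuity of $\iota$ established in the excerpt. I do not expect any genuine obstacle, since the heavy lifting has already been done in Theorem~\ref{BigGaloisRep} and in the topological discussion immediately preceding the corollary.
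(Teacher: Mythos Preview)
Your proposal is correct and follows exactly the approach indicated in the paper: the paragraph immediately preceding the corollary establishes the continuity of $\iota\colon\AoK\hookrightarrow{\OOO_{\Xfr}}_{/\K}$ and then states that the corollary follows ``by extending the coefficients of the result obtained over $\AoK$ in Theorem~\ref{BigGaloisRep} to ${\OOO_{\Xfr}}_{/\K}$''. Your write-up simply unfolds this base-change argument in full detail, including the specialization at $\mathfrak{m}_k$, and is essentially the proof the authors had in mind.
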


\section{The fine Selmer group and Galois deformations}\label{sec:deformations}

We stick to the notation introduced in Theorem~\ref{Stevens} and in Definition~\ref{def:families}. Let $F$ be a finite extension of $\Q$ and fix $k\in \Zf$ (see~\eqref{equation:definitionZf}). The fine Selmer group of $f_k$ over $F$ is defined by 
\begin{equation*}
	\Sel(F,V_{f_k}/T_{f_k})=\Ker\Big(H^1(\Q_S/F,V_{f_k}/T_{f_k}) \rightarrow \oplus_{v \in S}H^1(F_v,V_{f_k}/T_{f_k})\Big),
\end{equation*}
where $T_{f_k}$ is the lattice of the representation $V_{f_k}$ of $\rho_k$ from Theorem \ref{BigGaloisRep} .
The fine Selmer over the extension $\Q(\mu_{p^\infty})$ is defined by 
\begin{equation}\label{eq:fine_selmer_def}
	\Sel(\Q(\mu_{p^\infty}), V_{f_k}/T_{f_k}) = \varinjlim \Sel(F,V_{f_k}/T_{f_k}),
\end{equation}
where the limit is over all finite extensions of $\Q$ contained in $\Q(\mu_{p^\infty})$.
Set $(T_{f_k})^\star(1)=\Hom_{\cts}(T_{f_k} , \OO)\otimes_{\Z_p} \Z_p(1)$ and let $\Y(\Q(\mu_{p^\infty}),(T_{f_k})^\star(1))$ be the Pontryagin dual of the group $\Sel(\Q(\mu_{p^\infty}), V_{f_k}/T_{f_k})$. Since $p$ is odd,
\[
\Gal(\Q(\mu_{p^\infty})/\Q) \cong \Delta \times (1+p\Z_p)
\]
where $\Delta$ is a finite group of order $p-1$. For a character $\eta$ of $\Delta$, by a result of Kato \cite[Theorem~12.4(1)]{Kato}, we know that the $\eta$-isotypic component  $\big(\Y(\Q(\mu_{p^\infty}),(T_{f_k})^\star(1)\big)^\eta$ is torsion over the Iwasawa algebra $\Z_p[[1+p\Z_p]]$. For simplicity we will restrict ourselves to the case $\Q(\mu_{p^\infty})$; one could have also worked with the dual fine Selmer group over the cyclotomic extension of any abelian number field where Kato's result is true.

\begin{remark}
	
For supersingular elliptic curves,  the dual fine Selmer group is  very close to the torsion subgroup of the whole classical dual Selmer group as suggested by the result~\cite[Corollary~2.5]{Wingberg} of Wingberg, saying that given a supersingular elliptic curve $E/F$ and assuming the finiteness of $\Sha(E/F_n)_{p^\infty}$ for all $n$, the torsion part of the dual Selmer group of $E$ over $F_\cyc$ is pseudo-isomorphic to the  dual fine Selmer group of $E$ over $F_\cyc$. Therefore, philosophically, under appropriate hypotheses, the dual fine Selmer group is not far from the torsion part of the dual classical Selmer group. Under certain hypotheses, another evidence in this direction was also provided by Billot over the  $\Z_p^2$-extension $F_\infty=E[p^\infty]$ of $F$, when $E/F$ is an elliptic curve with complex multiplication by an imaginary quadratic field $K$ in $F$ that has good supersingular reduction for primes above $p$ (see \cite[Theorem~3.23]{Billot}). In \cite[Theorem 1.1]{Matar}, Matar gave a Galois-theoretic proof of Wingberg's result  with a slightly different hypothesis. Wingberg's result has been generalized by Lei and Lim to modular forms which are non-ordinary at $p$ under appropriate hypotheses (see \cite[Theorem 3.4]{Lei-Lim22}). 
\end{remark}

Let $\Lcyc:=\Z_p[[\Gal(\Q(\mu_{p^\infty})/\Q)]]$ be the Iwasawa algebra  of the group $\Gal(\Q(\mu_{p^\infty})/\Q)$. Let $\Lcyc({\tchi})$ be a free $\Lcyc$-module of rank one on which the absolute Galois group $G_\Q$ of $\Q$ acts via the  cyclotomic character
\[
{\tchi}\colon G_\Q \twoheadrightarrow \Gal(\Q(\mu_{p^\infty})/\Q) \hookrightarrow \Lcyc^\times.
\]
The following definition is adapted from Greenberg (see~\cite{Greenberg1} and~\cite{Greenberg2}).
\begin{defn}
Write $\tL:={\AoK} \widehat{\otimes}_{\Z_p}\Lcyc$, and let $\TT$ be as in Theorem~\ref{BigGaloisRep}. The cyclotomic deformation of $\TT$ is defined as
\[
\tT:= \TT \widehat{\otimes}_{\Z_p} \Lcyc({\tchi})
\]
regarded as a module over ${\tL}$. Set $\tD:=\tT \otimes_{{\tL}} \widehat{{\tL}}$ where $\widehat{{\tL}}=\Hom_{\mathcal{O}_\mathcal{K}}({\tL},\mathcal{K}/\mathcal{O}_\mathcal{K})$ is the Pontryagin dual of ${\tL}$. We endow $\tD$ with the discrete topology.
	
Finally, set $(\tT)^\star=\Hom_{\tL}(\tT, \tL)$ to be the $\tL$-linear  dual of $\tT$, and let $\TT^\star=\Hom_{{\AoK}}(\TT, {\AoK})$ be the ${\AoK}$-linear dual of $\TT$.
\end{defn}

Note that 
\begin{align*}
\Hom_{\OK}(\tD, K/\OK(1)) &\cong \Hom_{\OK}(\tT \otimes_{\tL} \widehat{\tL},\mathcal{K}/\OK(1)) \\
&\cong \Hom_{\tL}(\tT, \Hom_{\OK}(\widehat{\tL},K/\OK)) \otimes_{\Z_p} \Z_p(1)\\
&\cong \Hom_{\tL}(\tT,\widehat{\widehat{\tL}})(1)\\
&\cong \Hom_{\tL}(\tT, \tL)(1) = (\tT)^\star(1).
\end{align*}
Similarly, one can define $$\D=\TT \otimes_{{\AoK}} \widehat{{\AoK}}$$ where $\widehat{{\AoK}}$ is the Pontryagin dual of ${\AoK}$. 

Suppose $\phi\colon {\AoK} \twoheadrightarrow \OK$ is any continuous surjection, set $P_\phi=\Ker(\phi)$ and $T_{\phi}=\TT/P_\phi \TT $. Define
\[
\D_\phi:=\TT_\phi \otimes_{\OK} \widehat{\OK}.
\]
By Greenberg \cite[p.~336]{Greenberg1}, we have $\D[P_\phi]\cong\D_\phi$, where $\D[P_\phi]$ denotes the submodule of $\D$ annihilated by $P_\phi$.

Similarly, by \emph{loc.~cit}, if ${\tphi}\colon\tL \twoheadrightarrow \OK$ is a continuous surjection and $P_{{\tphi}}$ denotes $\Ker({\tphi})$, let  $\TT_{{\tphi}}:= \tT/P_{{\tphi}}\tT$, which is a free $\OK$-module of rank two. Then,
\[
\tD[P_{{\tphi}}]\cong \tD_{{\tphi}}:=\TT_{{\tphi}} \otimes_{\OK} \widehat{\OK}.
\]

Following \cite[p.~336]{Greenberg1}, we define the fine Selmer group of $\tD$ as 
\[
\mathcal{R}(\Q,\tD) = \Ker\Big(H^1(\Q_S/\Q, \tD) \rightarrow \oplus_{v \in S}H^1(\Q_v, \tD)\Big).
\]
 Finally, write $\Y\big(\Q, (\tT)^\star(1)\big)$ for the Pontryagin dual $\Hom_{\OO}(\Sel(\Q,\tD),\K/\OO)$ of $\Sel(\Q,\tD)$.

\section{Specialization at arithmetic primes}\label{sub:specialization}
We continue to work in the setting introduced in \S\S\ref{section:family_of_modular_forms}--\ref{sec:biggaloisrep}, and we fix a weight $k\in\Zf$. Recall from \S\ref{sec:biggaloisrep} that $P_k$ is a  height-one prime ideal of ${\AoK}$. Let $J_k={P_k\tL}$ be the corresponding height-one prime ideal of $\tL$. We have a map 
\begin{equation}\label{eqn:spe}
\frac{\DFS}{J_k\DFS} \overset{s_k}{\longrightarrow} \mathcal{Y}\Big(\Q, \frac{(\tT)^\star(1)}{J_k(\tT)^\star(1)}\Big)
\end{equation}
which arises from taking Pontryagin dual of the map 
\begin{equation}
\Sel(\Q,\tD[J_k]) \overset{\widehat{s_k}}{\longrightarrow} \Sel(\Q,\tD)[J_k].
\end{equation}

Given any $v \in S$, let $G_{\infty,v}$ be the Galois group $\Gal(\overline{\Q_v}/\Q_{\infty,v})$ where $\Q_{\infty,v} =\Q(\mu_{p^\infty})_v$.
\begin{lemma}\label{eq:descend}
For all $v \in S$, we have
\begin{equation*}
H_0\big(G_{\Q_v},(\tT)^\star\big)\cong H_0(G_{\infty,v},\TT^\star)^{\iota}
\end{equation*}
as ${\AoK}$-modules, where the exponent $\iota$ denotes that the Galois group acts by the involution $g \mapsto g^{-1}$.
	\end{lemma}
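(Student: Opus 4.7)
The plan is to compute $H_0(G_{\Q_v}, (\tT)^\star)$ by a two-step descent along the short exact sequence
\[
1 \to G_{\infty,v} \to G_{\Q_v} \to \Gamma_v \to 1,
\]
where $\Gamma_v := \Gal(\Q_{\infty,v}/\Q_v)$. Right-exactness of $H_0$ gives the identification $H_0(G_{\Q_v}, M) \cong H_0(\Gamma_v, H_0(G_{\infty,v}, M))$ for any continuous $G_{\Q_v}$-module $M$, which I will apply to $M = (\tT)^\star$.

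The key input is the canonical description $(\tT)^\star \cong \TT^\star \widehat{\otimes}_{\Z_p} \Lcyc(\tchi^{-1})$ as a $G_\Q$-module, where the action is diagonal: dually on $\TT^\star$ and by multiplication by $\tchi^{-1}$ on $\Lcyc$. Since $\tchi$ factors through $\Gamma = \Gal(\Q(\mu_{p^\infty})/\Q)$, the subgroup $G_{\infty,v}$ acts trivially on $\Lcyc(\tchi^{-1})$, and the first step of the descent will yield
\[
H_0(G_{\infty,v}, (\tT)^\star) \cong H_0(G_{\infty,v}, \TT^\star) \widehat{\otimes}_{\Z_p} \Lcyc
\]
as an ${\AoK}[\Gamma_v]$-module, with the residual $\Gamma_v$-action being diagonal: by the natural action $\sigma$ on $N := H_0(G_{\infty,v}, \TT^\star)$, and by $\tchi^{-1}$ on $\Lcyc$.

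It then remains to compute the $\Gamma_v$-coinvariants of $N \widehat{\otimes}_{\Z_p} \Lcyc$ under this diagonal action. The coinvariant relations $\sigma(\gamma)\, n \otimes \tchi(\gamma)^{-1}\lambda \sim n \otimes \lambda$ rewrite, after substituting $\lambda \mapsto \tchi(\gamma)\lambda$, as $\sigma(\gamma)\, n \otimes \lambda \sim n \otimes \tchi(\gamma)\lambda$, thereby transferring the $\Gamma_v$-action on $N$ to multiplication by $[\gamma] \in \Gamma_v \subseteq \Lcyc^\times$ inside $\Lcyc$. Composing with the involution $\iota \colon \Lcyc \to \Lcyc$, $[\gamma] \mapsto [\gamma^{-1}]$---which realises the $G_\Q$-equivariant identification $\Lcyc(\tchi^{-1}) \cong \Lcyc(\tchi)$---I expect this transfer to descend to the claimed ${\AoK}$-linear isomorphism with $N^\iota$: the involution $\iota$ enters precisely because it is required to convert the $\tchi^{-1}$-twist on $\Lcyc$ into a $\tchi$-twist, after which the transferred action matches the inverted $\Gamma_v$-action. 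The main obstacle will be carrying out this identification cleanly in the completed topological setting and tracking the origin of $\iota$; the case $v = p$, where $\Gamma_v = \Gamma$ so that $\Z_p[[\Gamma_v]]$ coincides with $\Lcyc$, is the cleanest and gives the identification on the nose, while for $v \nmid p$ one must carefully handle the embedding $\Z_p[[\Gamma_v]] \hookrightarrow \Lcyc$ associated to the closed subgroup $\Gamma_v \subseteq \Gamma$.
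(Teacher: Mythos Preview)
Your plan is correct and runs exactly dual to the paper's argument. Rather than computing $H_0\big(G_{\Q_v},(\tT)^\star\big)$ directly, the paper first Pontryagin-dualizes to reduce to showing $\tD^{G_{\Q_v}}\cong(\D^{G_{\infty,v}})^\iota$, then uses the Hom--tensor adjunction to rewrite $\tD\cong\Hom_{\Z_p}\!\big(\Lcyc(\tchi^{-1}),\D\big)$, takes $G_{\infty,v}$-invariants inside the Hom, and finally identifies $\Gamma_v$-invariants via $f\mapsto f(1)$. Your approach replaces each of these steps by its dual: you work with the tensor description $(\tT)^\star\cong\TT^\star\,\widehat{\otimes}_{\Z_p}\,\Lcyc(\tchi^{-1})$, take $G_{\infty,v}$-coinvariants through the tensor factor, and then identify $\Gamma_v$-coinvariants via the multiplication map $n\otimes\lambda\mapsto \lambda\cdot_\sigma n$. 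The computations line up step for step under Pontryagin duality, and the appearance of $\iota$ has the same source in both (converting between the $\tchi$- and $\tchi^{-1}$-twists on $\Lcyc$). Your direct route has the modest advantage of avoiding the duality bookkeeping; the paper's route makes the final evaluation map $f\mapsto f(1)$ slightly more transparent. Both arguments face the same subtlety you flag at $v\nmid p$, where $\Gamma_v$ is only an open subgroup of $\Gamma$; the paper handles this by declaring $(\D^{G_{\infty,v}})^\iota$ to be a $\Lcyc$-module, which is the same move you would need to make explicit when completing your tensor identification.
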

\begin{proof}

	The Pontryagin duals of $(\tT)^\star$ and $\TT^\star$ are $\tD$ and $\D$, respectively. So the Pontryagin duals of $(\tT^\star)_{G_{\Q_v}}$ and $(\TT^\star)^{\iota}_{G_{\infty,v}}$ are $\tD^{G_{\Q_v}}$ and $(\D^{G_{\infty,v}})^\iota$. Therefore, we have to show that 
	\begin{align}\label{eq: mithu}
	\tD^{G_{\Q_v}} \cong (\D^{G_{\infty,v}})^\iota.
	\end{align}
	For this we will follow the line of arguments presented in \cite[p.~214-215]{Greenberg2} (see also \cite[Remark~5.9.2]{Greenberg1}).
	Note that 
	\begin{align*}
	\Hom_{\OK}(\tL, \mathcal{K}/\OK) &= \Hom_{\OK}({\AoK}\widehat{\otimes}_{\Z_p} \Lcyc, \mathcal{K}/\OK).
	\end{align*}
Since we are considering continuous maps and the topology on $\mathcal{K}/\OK$ is discrete, we have 
\[
\Hom_{\OK}({\AoK} \widehat{\otimes}_{\Z_p} \Lcyc, \mathcal{K}/\OK)=\Hom_{\OK}({\AoK} {\otimes}_{\Z_p} \Lcyc, \mathcal{K}/\OK).
\]
Since the Hom functor and the tensor product form an adjoint pair, we deduce
\[
\Hom_{\OK}({\AoK} {\otimes}_{\Z_p} \Lcyc, \mathcal{K}/\OK)=\Hom_{\Z_p}(\Lcyc, \widehat{{\AoK}}).
\]
Therefore
	\begin{align*}
	\tD = \Big(\TT \otimes_{\Z_p} \Lcyc({\tchi})\Big) \otimes_{\tL} \Hom_{\Z_p}(\Lcyc, \widehat{{\AoK}}).
	\end{align*}
	Now $\tT=\TT \otimes_{\Z_p} \Lcyc({\tchi})$ is a free, module of rank $2$ over $\tL$ and hence 
	\begin{align*}
	\tD &\cong \Hom_{\Z_p}\big(\Lcyc, \TT \otimes_{{\AoK}} \widehat{{\AoK}}\big)({\tchi})\\
	&\cong \Hom_{\Z_p}\big(\Lcyc({\tchi}^{-1}),\D\big).
	\end{align*}
We deduce that
	\[
	\tD^{G_{\infty,v}}\cong \Hom_{G_{\infty,v}}\big(\Lcyc({\tchi}^{-1}),\D\big) \cong \Hom_{\Z_p}\big(\Lcyc({\tchi}^{-1}), \D^{G_{\infty,v}}\big).
	\]
	Writing $\Gamma_v$ for the local cyclotomic Galois group $\Gal(\Q_{\infty,v}/\Q_v)=G_{\Q_v}/G_{\infty,v}$,
	this implies that 
	\begin{align*}
	\tD^{G_{\Q_v}} &\cong \Hom_{\Gamma_v}\big(\Lcyc({\tchi}^{-1}), \D^{G_{\infty,v}}  \big),\\
	&\cong \Hom_{\Gamma_v}\big(\Lcyc({\tchi}), (\D^{G_{\infty,v}}  )^\iota\big).
	\end{align*}
	The argument that follows is identical to that in~\cite[pp.~214--215]{Greenberg2}. One can identify $\Hom_{\Gamma_v}\big(\Lcyc({\tchi}), (\D^{G_{\infty,v}}  )^\iota\big)$ with $\Hom_{\Lcyc}\big(\Lcyc, (\D^{G_{\infty,v}}  )^\iota\big)$ considering $\Lcyc({\tchi})$ and $(\D^{G_{\infty,v}}  )^\iota$ as ${\Lcyc}$-modules. But any element $f \in \Hom_{\Lcyc}\big(\Lcyc, (\D^{G_{\infty,v}}  )^\iota\big)$ is determined completely by $f(1)$ and hence there is an isomorphism $\Hom_{\Lcyc}\big(\Lcyc, (\D^{G_{\infty,v}}  )^\iota\big) \cong (\D^{G_{\infty,v}}  )^\iota$ as  $\Lcyc$-modules. This completes the proof of \eqref{eq: mithu}.
\end{proof}
Let $M$ be the maximal ideal of ${\AoK}$, and set $\resfield=\AoK/M$. For the rest of the article,  we will need the following hypothesis.
\begin{hyp}\label{hyp}
	The residual $\resfield$-representation $\boldsymbol{\rho}/M$ is irreducible when restricted to $G_{\Q_p}$, where $\boldsymbol{\rho}$ is the Galois representation constructed in Theorem \ref{BigGaloisRep}.
\end{hyp}

\begin{theorem}\label{thm:specialization}
 Under Hypothesis \ref{hyp}, the map $s_k$ in \eqref{eqn:spe} is surjective and $\Ker(s_k)$ is a finitely generated $\OK$-module for all $k \in {\Zf}$. Furthermore, $\Ker(s_k)$ is finite for all but finitely many $k$.
\end{theorem}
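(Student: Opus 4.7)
The plan is a Greenberg-style descent argument using the snake lemma. Since $\AoK\cong\OK[[X]]$ with $X=(T-k_0)/e_0$ is a two-dimensional regular local ring, the height-one prime $P_k$ is principal; fix a generator $\alpha_k$ of $P_k$, so that $J_k=\alpha_k\tL$ is also principal in $\tL$. As $\tT$ is free over $\tL$, the cofree $\tL$-module $\tD=\tT\otimes_{\tL}\widehat{\tL}$ is $\alpha_k$-divisible, yielding an exact sequence of $G_{\Q,S}$-modules
\[
0\longrightarrow \tD[J_k]\longrightarrow\tD\overset{\alpha_k}{\longrightarrow}\tD\longrightarrow 0.
\]
Taking cohomology with respect to $G_{\Q,S}$ and with respect to each local $G_{\Q_v}$ for $v\in S$, and comparing the resulting Selmer-defining sequences via the snake lemma, I obtain
\[
0\longrightarrow \Ker\psi_k\longrightarrow\Sel(\Q,\tD[J_k])\overset{\widehat{s_k}}{\longrightarrow}\Sel(\Q,\tD)[J_k]\longrightarrow\Coker\psi_k,
\]
where $\psi_k\colon\tD^{G_{\Q,S}}/\alpha_k\tD^{G_{\Q,S}}\to\bigoplus_{v\in S}\tD^{G_{\Q_v}}/\alpha_k\tD^{G_{\Q_v}}$ is the diagonal of the local restriction maps. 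Since Pontryagin duality exchanges kernels with cokernels, $s_k$ is surjective iff $\widehat{s_k}$ is injective (i.e.\ $\Ker\psi_k=0$), and $\Ker(s_k)$ is the Pontryagin dual of $\Coker(\widehat{s_k})$.

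\textbf{Surjectivity from Hypothesis~\ref{hyp}.} The residual representation of $\tT$ at the maximal ideal of $\tL$ is the twist of $\boldsymbol{\rho}/M$ by the reduction of the cyclotomic character; under Hypothesis~\ref{hyp} it remains irreducible on $G_{\Q_p}$. Nakayama's lemma applied to the $G_{\Q_p}$-coinvariants of $(\tT)^\star(1)$ then yields $\big((\tT)^\star(1)\big)_{G_{\Q_p}}=0$; by Pontryagin duality, $\tD^{G_{\Q_p}}=0$, and hence $\tD^{G_{\Q,S}}=0$. The source of $\psi_k$ is thus trivial, $\widehat{s_k}$ is injective, and $s_k$ is surjective.

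\textbf{Kernel and the main obstacle.} The same vanishing also kills the $v=p$ summand of the target of $\psi_k$, so $\Ker(s_k)$ is a quotient of $\bigoplus_{v\in S\setminus\{p\}}\big[((\tT)^\star(1))_{G_{\Q_v}}\big][\alpha_k]$, where I use Lemma~\ref{eq:descend} to identify the Pontryagin dual of $\tD^{G_{\Q_v}}$ with the $G_{\Q_v}$-coinvariants of $(\tT)^\star(1)$. Each such module is finitely generated over $\tL$, being a quotient of the free rank-two $\tL$-module $(\tT)^\star(1)$. The hardest step, and the main obstacle of the proof, is to show that each $((\tT)^\star(1))_{G_{\Q_v}}$ is actually $\tL$-torsion with $\alpha_k$-torsion finitely generated over $\OK$ for every $k$; I would establish this by analysing the Weil--Deligne representation attached to the Coleman family at $v\mid N$, using the Frobenius eigenvalues together with the residual irreducibility to preclude a rank-two coinvariant quotient. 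Granting the torsion property, the supports of these finitely many torsion modules form a finite set of height-one primes in $\operatorname{Spec}\tL$, so for all but finitely many $k\in\Zf$ the prime $J_k$ avoids every such support, the local $\alpha_k$-torsions are finite, and $\Ker(s_k)$ is finite.
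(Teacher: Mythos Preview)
Your snake-lemma framework and the surjectivity argument via Hypothesis~\ref{hyp} match the paper's proof. The gap lies in the analysis of $\Ker(s_k)$, and it originates in a misreading of Lemma~\ref{eq:descend}. That lemma does \emph{not} identify the Pontryagin dual of $\tD^{G_{\Q_v}}$ with the $G_{\Q_v}$-coinvariants of $(\tT)^\star(1)$---that identification is plain Pontryagin duality and needs no lemma. What Lemma~\ref{eq:descend} actually gives is the Shapiro-type descent
\[
H_0\bigl(G_{\Q_v},(\tT)^\star\bigr)\;\cong\;H_0\bigl(G_{\infty,v},\TT^\star\bigr)^\iota
\qquad\text{as }\AoK\text{-modules},
\]
replacing a module over the two-variable ring $\tL$ by one over the one-variable ring $\AoK$. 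Since $\TT^\star$ is free of rank two over $\AoK$, the right-hand side is a finitely generated $\AoK$-module; its $P_k$-torsion is then automatically finitely generated over $\AoK/P_k$, which is finite over $\OK$. This is how the paper obtains ``$\Ker(s_k)$ finitely generated over $\OK$ for every $k$'' with no Weil--Deligne input whatsoever, and finiteness for almost all $k$ follows because over the two-dimensional regular local ring $\AoK$ pseudo-null means finite, so one only has to avoid the finitely many height-one primes in the characteristic ideal of the $\AoK$-torsion submodule.

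Your proposed alternative---proving each $((\tT)^\star(1))_{G_{\Q_v}}$ is $\tL$-torsion and then concluding that when $J_k$ avoids the height-one support the $\alpha_k$-torsion is finite---does not work over $\tL$. Each component $\tL^\eta$ is a \emph{three}-dimensional regular local ring, where pseudo-null modules need not be finite, so the error terms in the structure theorem can contribute $\alpha_k$-torsion that is not even finitely generated over $\OK$. Concretely, the generators $\alpha_k$ for $k\in\Zf$ are all congruent modulo $\varpi$ (since $|k-k_0|<|e_0|$ forces $(k-k_0)/e_0\in\varpi\OK$), so a pseudo-null module such as $\tL^\eta/(\varpi,\alpha_{k_0})$ has $\alpha_k$ acting by zero, hence $\alpha_k$-torsion equal to all of itself, for \emph{every} $k\in\Zf$. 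The descent to $\AoK$ via Lemma~\ref{eq:descend} is therefore not a cosmetic simplification but the step that collapses the Krull dimension and makes the structure-theoretic bound valid.
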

\begin{proof}
	We have the following commutative diagram with exact rows.
		\begin{equation}\label{fundamental_diagram}
	\begin{tikzcd}
	0 \arrow[r] & \Sel(\Q,\tD[J_k]) \arrow[d, "\widehat{s_k}"] \arrow[r ] & H^1(\Q_S/\Q, \tD[J_k]) \arrow[d, "g_k"] \arrow[r] & \displaystyle{\bigoplus_{v \in S}H^1(\Q_v, \tD[J_k]) \arrow[d, "h_k"]}  \\
	0 \arrow[r] & \Sel(\Q,\tD)[J_k]\arrow[r] & H^1(\Q_S/\Q, \tD)[J_k]  \arrow[r] & \displaystyle{\bigoplus_{v \in S}H^1(\Q_v, \tD) [J_k]}
	\end{tikzcd}
	\end{equation}
Observe that $\D[M]$ (the submodule of $\D$ annihilated by $M$) is a finite dimensional $\resfield$-representation and, by Hypothesis~\ref{hyp},
\[
H^0(\Q_S/\Q, \D[M])= H^0(\Q_S/\Q, \D)[M]=0.
\]
Note that $\Lcyc({\tchi})$ is isomorphic to $p-1$ copies of the Iwasawa algebra in one variable over $\Z_p$. Hence, the residual $\resfield$-representation $\tT/M\tT$ associated to $\tT$ is isomorphic to a direct sum of $p-1$ copies of the irreducible residual representation of $\TT/M\TT$. Therefore, any $\resfield$-irreducible subquotient of the residual representation of  $\tT/M\tT$ is  isomorphic to the irreducible residual representation $\TT/M\TT$. 
Applying \cite[Proposition~3.4]{Greenberg1}, we deduce that the map $g_k$ is an isomorphism and hence $\widehat{s_k}$ is injective (that is, $s_k$ is surjective). In the following, we will analyse $\widehat{\Ker(h_k)}$ and show that $\widehat{\Ker(h_k)}$ is a finitely generated $\OK$-module for all $k$, and is finite for all but finitely many $k$. By the snake lemma, the same conclusion will hold for $\widehat{\Coker(\widehat{s_k})}\cong\Ker(s_k)$.
We remark that the Pontryagin dual of $\Ker(h_k)$ is isomorphic to $\oplus_{v \in S}(\tT^\star)_{G_{\Q_v}}[J_k]$.

By Lemma~\ref{eq:descend}, we obtain
\begin{equation*}
\bigoplus_{v \in S}(\tT^\star)_{G_{\Q_v}}[J_k]= \bigoplus_{v \in S}(\TT^\star)^{\iota}_{G_\infty,v}[P_k].
\end{equation*}
If $P_k$ does not divide the characteristic ideal of the torsion subgroup of $\oplus_{v \in S}(\TT^\star)^{\iota}_{G_\infty,v}$ as an ${\AoK}$-module, then we obtain that $\oplus_{v \in S}(\TT^\star)^{\iota}_{G_\infty,v}[P_k]$ is finite (see \cite[Lemma~4.1]{Ochiai1} for a generalization of this to  modules over Iwasawa algebras of several variables). Note also that, for all $k$, $\oplus_{v \in S}(\TT^\star)^{\iota}_{G_\infty,v}[P_k]$ is a finitely generated ${\AoK}/P_k$-module, and thus finitely generated over $\OO$ because $\AoK/P_k$ is finite over $\OO$. This shows that $\widehat{\Coker(\widehat{s_k})}$ is a finitely generated $\OK$-module for all~$k$, and it is finite for all but finitely many~$k$.

Let us denote
\[
{\tCT}_{P_k}:=(\TT/P_k\TT){\otimes}_{\Z_p} \Lcyc({\tchi})
\]
which is the cyclotomic deformation of $\TT/P_k\TT \cong T_{f_k}$. 
By \cite[p.~357]{Greenberg1},
\[
\tD[J_k] \cong {\tCT}_{P_k} \otimes_{\Lcyc} \widehat{\Lcyc}.
\] 
It follows from classical results of Greenberg (see, for example, \cite[(2), p.~342]{Greenberg1}) that 
\begin{equation}\label{eq:biswarup}
\Sel(\Q, {\tCT}_{P_k} \otimes_{\Lcyc} \widehat{\Lcyc}) \cong \Sel(\Q(\mu_{p^\infty}),V_{f_k}/T_{f_k})^\iota,
\end{equation}
where $\Sel(\Q(\mu_{p^\infty}),V_{f_k}/T_{f_k})$ is the  fine Selmer group defined in \eqref{eq:fine_selmer_def}.
Note that Greenberg's result is over the cyclotomic extension.  But our $\Lcyc$ is isomorphic to $\Z_p[[\Z_p^\times]]$ and hence we recover the fine Selmer group over $\Q(\mu_{p^\infty})$.  See also  \cite[Proposition~3]{Somnath} where Jha and Sujatha prove \eqref{eq:biswarup} in the ordinary case. The proof remains essentially the same in our case. Therefore,
\[
\Sel(\Q(\mu_{p^\infty}),V_{f_k}/T_{f_k})^\iota \overset{\widehat{s_k}}{\longrightarrow} \Sel(\Q,\tD)[J_k]
\]
is injective, $\widehat{\Coker(\widehat{s_k})}$ is a finitely generated $\OK$-module for all $k$, and is finite for all but finitely many~$k$. Dually, the map
\begin{equation}\label{eq:imp}
\frac{\DFS}{J_k\DFS} \overset{s_k}{\longrightarrow} \Y(\Q(\mu_{p^\infty}),T_{f_k}^\star(1))^\iota
\end{equation}
is surjective and $\Ker(s_k)$ is a finitely generated $\OK$-module for all~$k$. Furthermore, $\Ker(s_k)$ is finite for all but finitely many~$k$. This completes the proof of Theorem~\ref{thm:specialization}.
\end{proof}
Recall $\Z_p^\times \cong \Delta \times (1+p\Z_p)$ where $\Delta$ is a finite group of order $p-1$.
For a Dirichlet character $\eta\colon \Delta \rightarrow \Z_p^\times$, let $e_\eta$ be the idempotent attached to $\eta$. Given a $\Z_p[[\Z_p^\times]]$-module $M$ define its $\eta$-isotypic component to be
\[
M^\eta=e_\eta\cdot M,
\]
considered as a $\Z_p[[1+p\Z_p]]$-module. As in \cite[p.~3]{LeiPonsinet20}, we say that  $M$ has rank $r$ as a $\Z_p[[\Z_p^\times]]$-module if $M^\eta$ has rank $r$ as a $\Z_p[[1+p\Z_p]]$-module for all the characters $\eta$ of $ \Delta$. Hence, $M$ is torsion as a $\Z_p[[\Z_p^\times]]$-module if $M^\eta$ is torsion as a $\Z_p[[1+p\Z_p]]$-module for every character $\eta$.
\begin{proposition}\label{prop:torsion}
For every character $\eta$ of $\Delta$, the component $\DFS^\eta$ of the dual fine Selmer group is $\tL^\eta$-torsion.
\end{proposition}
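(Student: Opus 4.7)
Set $Y := \DFS^\eta$, viewed as a finitely generated module over $R := \tL^\eta$. The ring $R$ is isomorphic to a power series ring in two variables over $\OK$, hence a regular local domain of Krull dimension three. The plan is to show $Y$ is $R$-torsion by combining the specialization statement (Theorem~\ref{thm:specialization}) with Kato's torsion result, using the family of height-one primes $J_k^\eta := P_k R$ as $k$ ranges over the infinite set $\Zf$.

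First I would collect elementary facts about these primes. Because $R$ is a power-series ring over $\AoK$ and $P_k$ is a height-one prime of $\AoK$, each $J_k^\eta$ is itself a height-one prime of $R$, and the quotient $R/J_k^\eta \cong (\AoK/P_k)[[1+p\Z_p]]$ is a one-variable Iwasawa algebra over a finite extension of $\Z_p$. Distinct $k$ give distinct primes $J_k^\eta$ (since $\AoK \hookrightarrow R$ is faithfully flat), so $\{J_k^\eta : k \in \Zf\}$ is an infinite set of height-one primes of $R$.

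Next I would specialize. Taking $\eta$-components in~\eqref{eq:imp}, I obtain for each $k \in \Zf$ a surjection
\[
Y/J_k^\eta Y \twoheadrightarrow \Y(\Q(\mu_{p^\infty}),T_{f_k}^\star(1))^{\iota,\eta}
\]
whose kernel is finitely generated over $\OK$, and is finite for all but finitely many $k$. By Kato's theorem \cite[Theorem~12.4]{Kato}, the target is torsion over $\Z_p[[1+p\Z_p]]$; since $\AoK/P_k$ is finite over $\Z_p$, the target is also torsion over $R/J_k^\eta$. Together with the (almost always) finite kernel, this shows that $Y/J_k^\eta Y$ is $R/J_k^\eta$-torsion for all but finitely many $k$.

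Finally I would invoke the commutative-algebra fact that, for a finitely generated module $Y$ of rank $r$ over a Noetherian domain $R$, there exist only finitely many height-one primes $J$ with $\operatorname{rk}_{R/J}(Y/JY) < r$. Concretely, writing $0 \to Y_{\mathrm{tors}} \to Y \to Y/Y_{\mathrm{tors}} \to 0$ and then choosing an embedding $0 \to Y/Y_{\mathrm{tors}} \to R^r \to Q \to 0$ with $Q$ torsion, one checks that the only problematic primes lie in the supports of $Y_{\mathrm{tors}}$, of $Q$, and of $\operatorname{Tor}_1^R(Q,R/J)$, which together contain only finitely many height-one primes of $R$. Assuming for contradiction that $Y$ has rank $r \geq 1$, one finds infinitely many $k \in \Zf$ with $J_k^\eta$ outside this finite bad locus; for such $k$, $Y/J_k^\eta Y$ has rank $\geq 1$ over $R/J_k^\eta$, contradicting the previous paragraph. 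Hence $Y$ is $R$-torsion.

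The main obstacle is making the last step airtight: one must verify that the height-one primes $J_k^\eta$ really exhaust (infinitely many of) the generic locus of $\operatorname{Spec}(R)$, which relies on the fact that $\Zf$ is infinite, that $\AoK \hookrightarrow R$ is flat, and that the bad locus described above is cut out by finitely many proper divisors in a three-dimensional regular local ring.
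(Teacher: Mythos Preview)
Your proposal is correct and follows essentially the same strategy as the paper: specialize via Theorem~\ref{thm:specialization} to reduce $\DFS^\eta/J_k$ to the classical dual fine Selmer group, invoke Kato to see that the latter is $\Lcyc^\eta$-torsion, and then argue that a positive $\tL^\eta$-rank would force a positive rank after reduction. The only cosmetic difference is that the paper picks a single $k$ with $\Ker(s_k)$ finite and appeals directly to the structure theorem, whereas you quantify over the infinite family $\{J_k^\eta\}$ and make the ``bad locus'' of height-one primes explicit; your version is a slightly more careful write-up of the same idea.
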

\begin{proof}
Fix an $\eta$ as in the statement. By~\eqref{eq:imp}, choose $k$ such that the map
\[
\frac{\DFS}{J_k\DFS} \xrightarrow{s_k} \Y(\Q(\mu_{p^\infty}),T_{f_k}^\star(1))^\iota
\]
is surjective and the kernel of $s_k$ is finite. 
Using the Poitou--Tate exact sequence (see for example \cite[\S A.3.1]{perrinriou95}) and a deep result of Kato  \cite[Theorem~12.4]{Kato}, we know that $\Y(\Q(\mu_{p^\infty}),T_{f_k}^\star(1)) $ is torsion as a $\Lcyc$-module. Now suppose $\DFS^\eta$ is not $\tL^\eta$-torsion. Then by the structure theorem $\DFS^\eta$ can be decomposed into an $\tL^\eta$-free part of rank at least 1 and an $\tL^\eta$-torsion part. 
Recall that  ${J_k=P_k\tL}$ is a height one prime ideal of $\tL$, where $P_k$ is a  height one prime ideal of ${\AoK}$.
Hence the  $\tL^\eta$-free part of $\DFS^\eta$ modulo $J_k$ will contain a free $\Lcyc^\eta$ module of rank at least 1. But this is impossible as the kernel of $s_k$ is finite and  $\Y(\Q(\mu_{p^\infty}),T_{f_k}^\star(1))^\eta$ is $\Lcyc^\eta$-torsion.
\end{proof}
\section{\texorpdfstring{Variations of Iwasawa $\mu$ and $\lambda$-invariants}{}}\label{sub:maisection}
As before, in this section, we work under Hypothesis \ref{hyp}, and in the setting of \S\S\ref{section:family_of_modular_forms}--\ref{sec:biggaloisrep}.
From Proposition \ref{prop:torsion} and from the discussion before Theorem~\ref{introtheorem:Ray}, we know that for every character $\eta$ of $\Delta$, the component $\DFS^\eta$ of the fine Selmer group is $\tL^\eta$-torsion and the specialization $\Y(\Q(\mu_{p^\infty}),T_{f_k}^\star(1))^{\eta}$ is also $\Lcyc^\eta$-torsion for all points $k\in {\Zf}$.

\begin{theorem}\label{thm:main_mu}
For every character $\eta$ of $\Delta$, the following are equivalent:
\begin{enumerate}[label=\textup{(}\arabic*\textup{)}]
	\item The  fine Selmer group $\DFS^{\eta}$ is a finitely generated ${\AoK}$-module.\label{pt:thm_main_mu:fg}
	\item The classical dual fine Selmer group $\Y(\Q(\mu_{p^\infty}),T_{f_k}^\star(1))^{\eta}$ is a finitely generated $\OK$-module for all points $k\in {\Zf}.$ That is, the $\mu$-invariants of  $\Y(\Q(\mu_{p^\infty}),T_{f_k}^\star(1))^{\eta}$ is zero for all points $k\in {\Zf}$.\label{pt:thm_main_mu:zero_all}
	\item Part~\ref{pt:thm_main_mu:zero_all} above holds for some  $k$. That is, there exists $k$ such that the dual fine Selmer group $\Y(\Q(\mu_{p^\infty}),T_{f_k}^\star(1))^{\eta}$ is a finitely generated $\OK$-module for that $k$ (equivalently, its $\mu$-invariant is zero).\label{pt:thm_main_mu:zero_somes}
\end{enumerate}
\end{theorem}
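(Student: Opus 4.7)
The plan is to establish the cyclic chain $(1) \Rightarrow (2) \Rightarrow (3) \Rightarrow (1)$. The two substantive implications both rest on Theorem~\ref{thm:specialization} combined with topological Nakayama's lemma, applied to $\DFS^\eta$ regarded as a compact module over the two-dimensional complete local ring $\AoK$.

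For $(1) \Rightarrow (2)$, I would note first that $J_k = P_k\tL$, so $\DFS^\eta / J_k\DFS^\eta$ coincides with $\DFS^\eta / P_k\DFS^\eta$. If $\DFS^\eta$ is finitely generated over $\AoK$, this quotient is finitely generated over $\AoK/P_k$, which is finite over $\OK$ by the construction of $P_k$ in Section~\ref{sec:biggaloisrep}. Hence it is finitely generated over $\OK$, and the surjection $s_k$ of Theorem~\ref{thm:specialization} transports this to the vanishing of the $\mu$-invariant of the classical dual fine Selmer group (after matching $\eta$-isotypic components under the involution $\iota$). The implication $(2) \Rightarrow (3)$ is trivial.

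The heart of the argument is $(3) \Rightarrow (1)$. Assume that $\Y(\Q(\mu_{p^\infty}),T_{f_{k_0}}^\star(1))^\eta$ is finitely generated over $\OK$ for some $k_0 \in \Zf$. Since $\ker(s_{k_0})$ is also finitely generated over $\OK$ by Theorem~\ref{thm:specialization}, the same holds for $\DFS^\eta / P_{k_0}\DFS^\eta$. Reducing further modulo a uniformizer $\varpi$ of $\OK$ yields a finite $\kappa$-vector space $\DFS^\eta / (P_{k_0},\varpi)\DFS^\eta$. The key algebraic observation is that $(P_{k_0},\varpi) \subseteq \AoK$ is $\mathfrak{m}_{\AoK}$-primary: indeed $\AoK/(P_{k_0},\varpi)$ is the reduction modulo~$\varpi$ of the finite $\OK$-algebra $\AoK/P_{k_0}$, hence an Artinian local ring whose radical is $\mathfrak{m}_{\AoK}$. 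Consequently $\DFS^\eta / \mathfrak{m}_{\AoK}\DFS^\eta$ is itself finite. Since $\DFS^\eta$ is profinite as the Pontryagin dual of the discrete module $\Sel(\Q,\tD)^\eta$, topological Nakayama's lemma over the Noetherian complete local ring $\AoK$ then forces $\DFS^\eta$ to be finitely generated over $\AoK$.

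The main obstacle will be verifying cleanly the hypotheses of topological Nakayama in this two-variable setting, and in particular ensuring that the reduction $\DFS^\eta / \mathfrak{m}_{\AoK}\DFS^\eta$ really is finite rather than merely finitely generated over $\OK$. The character bookkeeping induced by the involution $\iota$---which identifies $(\Y^\iota)^\eta$ with the $\eta^{-1}$-isotypic component of $\Y$---is essentially formal but needs to be tracked carefully to match the statements of (1), (2) and (3) as written. Beyond these points, no further structural input from the theory of Coleman families is required.
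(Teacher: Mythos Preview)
Your argument is correct and, for the implications $(1)\Rightarrow(2)$ and $(3)\Rightarrow(1)$, coincides with the paper's proof: both use the surjectivity and controlled kernel of $s_k$ from Theorem~\ref{thm:specialization} together with the observation that $(P_k,\varpi)$ is $\mathfrak{m}_{\AoK}$-primary, so that topological Nakayama over the two-dimensional complete local ring $\AoK$ applies. Your careful remark about the $\iota$-twist on $\eta$-isotypic components is well placed.

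The one genuine difference is in how the equivalence $(2)\Leftrightarrow(3)$ is obtained. You deduce it by routing through $(1)$, which is perfectly valid and economical. The paper instead proves $(2)\Leftrightarrow(3)$ \emph{directly}, without reference to the big dual fine Selmer group: it uses the Kummer sequence for multiplication by $\varpi$ on $A_{f_k}=V_{f_k}/T_{f_k}$ to show that finiteness of $\bigl(\Sel(\Q(\mu_{p^\infty}),A_{f_k})[\varpi]\bigr)^\eta$ is equivalent to finiteness of $\Sel\bigl(\Q(\mu_{p^\infty}),A_{f_k}[\varpi]\bigr)^\eta$, and the latter depends only on the residual representation, which is constant along the Coleman family. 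This direct argument buys a conceptual point your route does not make explicit: the vanishing of $\mu$ propagates across the family purely by residual congruence, independently of the existence or structure of $\DFS$. On the other hand, your cyclic chain is shorter and avoids the auxiliary commutative diagram.
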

\begin{proof}
Fix a character $\eta$ as in the statement. 

We first show that~\ref{pt:thm_main_mu:zero_all} and~\ref{pt:thm_main_mu:zero_somes} are equivalent. Fix a weight $k\in\Zf$ and let $A_{f_k}=V_{f_k}/T_{f_k}$. As before, let $\varpi$ be a uniformizer of $\OK$.  We have the following Kummer sequence associated to multiplication by $\varpi$:
\[
0 \rightarrow A_{f_k}[\varpi] \rightarrow A_{f_k} \xrightarrow{\varpi} A_{f_k} \rightarrow 0.
\]
 This gives us the following commutative diagram with exact rows.
 \[
 {\tiny 
 \begin{tikzcd}
 0 \arrow[r] & \big(\frac{H^0(\Q_S/\Q(\mu_{p^\infty}),A_{f_k})}{\varpi}\big)^\eta \arrow[d, "\alpha"] \arrow[r] & H^1\big(\Q_S/\Q(\mu_{p^\infty}), A_{f_k}[\varpi]\big)^\eta \arrow[d, "\beta"] \arrow[r] & \Big(H^1\big(\Q_S/\Q(\mu_{p^\infty}), A_{f_k}\big)[\varpi]\Big)^\eta  \arrow[d, "\gamma"] \arrow[r] & 0 \\
 0 \arrow[r] & \big(\frac{\oplus_{v \in S}H^0(\Q(\mu_{p^\infty})_v,A_{f_k})}{\varpi}\big)^\eta \arrow[r] & \oplus_{v \in S}H^1\big(\Q(\mu_{p^\infty})_v, A_{f_k}[\varpi]\big)^\eta \arrow[r] & \Big(\oplus_{v \in S}H^1\big(\Q(\mu_{p^\infty})_v, A_{f_k}\big)[\varpi]\Big)^\eta   \ar[r] & 0
 \end{tikzcd}
}
 \]
\noindent Each of the first terms in the two horizontal exact sequences is finite. By definition, the kernel of $\beta$ is $\Sel\big(\Q(\mu_{p^\infty}), A_{f_k}[\varpi]\big)^\eta$ which is the $\eta$-component of the  fine  Selmer group corresponding to the module $A_{f_k}[\varpi]$ over the extension $\Q(\mu_{p^\infty})$. Clearly, the kernel of $\gamma$ is $\Big(\Sel\big(\Q(\mu_{p^\infty}), A_{f_k}\big)[\varpi]\Big)^\eta$. Therefore, by the snake lemma $ \Big(\Sel\big(\Q(\mu_{p^\infty}), A_{f_k}\big)[\varpi]\Big)^\eta$  is finite if and only if $  \Sel\big(\Q(\mu_{p^\infty}), A_{f_k}[\varpi]\big)^\eta$ is finite; 
but the latter only depends on the residual representation and all forms in the Coleman family have the same residual representation.

Now we show that~\ref{pt:thm_main_mu:fg} implies~\ref{pt:thm_main_mu:zero_somes}. By Theorem \ref{thm:specialization} and \eqref{eq:imp}, there exists a point $k$ such that
\[
\frac{\DFS}{J_k\DFS} \overset{s_k}{\longrightarrow} \Y(\Q(\mu_{p^\infty}),T_{f_k}^\star(1))^\iota
\]
has finite kernel and is surjective. Taking $\eta$-isotypic components,~\ref{pt:thm_main_mu:fg} implies that the quotient $\frac{\DFS^\eta}{J_k\DFS^\eta}$ is finitely generated over ${\AoK}/P_k \cong \OK$, and hence so is
the component $\Y(\Q(\mu_{p^\infty}),T_{f_k}^\star(1))^\eta$. Next we show that~\ref{pt:thm_main_mu:zero_somes} implies~\ref{pt:thm_main_mu:fg} By Nakayama's lemma, it suffices to show that $\frac{\DFS^\eta}{J_k\DFS^\eta}$  is a finitely generated $\OK$-module. But $\Ker(s_k)$ is a quotient of $\widehat{\Ker(h_k)}$ (where $h_k$ is as in diagram \eqref{fundamental_diagram}) which is a finitely generated $\OK$-module by the proof of Theorem \ref{thm:specialization}. 
\end{proof}

\begin{remark}\label{rem:conja}
Recall that Conjecture A of Coates and Sujatha deals with the case of elliptic curves defined over a number field $F$. It says that the $\mu$-invariant of the dual fine Selmer group over the cyclotomic $\Z_p$-extension of $F$ is trivial  (see \cite[p.~822]{CoatesSujatha_fineSelmer}). This is equivalent to saying that this  dual fine Selmer group is a finitely generated $\Z_p$-module. Hence the fact that  $\Y(\Q(\mu_{p^\infty}),T_{f_k}^\star(1))^{\eta}$ is a finitely generated $\OK$-module (namely, part ~\ref{pt:thm_main_mu:zero_somes} of Theorem~\ref{thm:main_mu}) is essentially  Conjecture A  generalized for modular forms. Explicit examples of elliptic curves and modular forms with good \textit{ordinary} reduction at the prime $p$ satisfying Conjecture A are given in \cite{Chandrakant14}. For another approach to Coates and Sujatha's Conjecture A and variations of Iwasawa invariants beyond the ordinary case, see~\cite{NucSuj21}; see~\cite{KunNucSuj22} for its relations with Greenberg's Generalized Conjecture.
\end{remark}
\begin{theorem}\label{thm:lambda}
Assume any of the equivalent conditions in Theorem \ref{thm:main_mu}.  Then the $\lambda$-invariants of $\Y(\Q(\mu_{p^\infty}),T_{f_k}^\star(1))^{\eta}$ are equal for all but finitely many points $k\in {\Zf}$.
\end{theorem}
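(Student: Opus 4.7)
The plan is to combine Theorem~\ref{thm:specialization} with the structure theorem for finitely generated modules over the two-dimensional regular local UFD $\AoK = \OK[\![U]\!]$, where $U := (T-k_0)/e_0$.

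Under any of the equivalent conditions of Theorem~\ref{thm:main_mu}, the module $X := \DFS^\eta$ is finitely generated over $\AoK$, and the Iwasawa $\mu$-invariants of $Y^\eta_k := \Y(\Q(\mu_{p^\infty}), T_{f_k}^\star(1))^\eta$ all vanish. Hence $\lambda(Y^\eta_k)$ is a $k$-independent multiple of $\mathrm{rank}_{\OK}(Y^\eta_k)$. By Theorem~\ref{thm:specialization}, for all but finitely many $k \in \Zf$ the specialization map $X/J_k X \to (Y_k)^\iota$ has finite kernel (on $\eta$-isotypic components, the involution $\iota$ at most swaps $\eta$ and $\eta^{-1}$, which is compatible with the statement of the theorem, as it must be proved for every character $\eta$). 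Noting that $X/J_k X = X/P_k X$ by finite generation over $\AoK$, we obtain $\mathrm{rank}_{\OK}(Y^\eta_k) = \mathrm{rank}_{\OK}(X/P_k X)$ for these $k$. The task therefore reduces to showing that $\mathrm{rank}_{\OK}(X/P_k X)$ is constant on a cofinite subset of $\Zf$.

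To this end, we apply the structure theorem for finitely generated $\AoK$-modules, which yields a pseudo-isomorphism
\[
X \longrightarrow \AoK^{\oplus r} \oplus \bigoplus_{i=1}^{n} \AoK/(\pi_i^{a_i}),
\]
where $r = \mathrm{rank}_{\AoK}(X)$, the $\pi_i$ are irreducible elements of $\AoK$, and the kernel and cokernel are pseudo-null (hence of finite length). Excluding the finitely many $k$ for which $P_k = (\pi_i)$ for some $i$, the free summand $\AoK^{\oplus r}$ specializes to $\OK^{r}$ and contributes $\OK$-rank exactly $r$; each $\AoK/(\pi_i^{a_i}, P_k)$ is of finite length (as $\pi_i$ and $P_k$ are coprime height-one primes in the UFD $\AoK$), hence $\OK$-torsion; and the pseudo-null parts, together with any $\mathrm{Tor}_{1}^{\AoK}$ terms arising from tensoring the short exact sequences induced by the pseudo-isomorphism with $\AoK/P_k$, remain finite and therefore $\OK$-torsion. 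This yields $\mathrm{rank}_{\OK}(X/P_k X) = r$ for all but finitely many $k$, and consequently $\lambda(Y^\eta_k)$ is constant on a cofinite subset of $\Zf$.

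The main technical obstacle we anticipate is the last structure-theoretic step: one must carefully verify that no pseudo-null contribution or higher Tor term perturbs the generic rank under specialization. This is standard once one invokes that pseudo-null modules over the two-dimensional regular local ring $\AoK$ are of finite length, hence finite as sets, so that their images modulo $P_k$ remain finite.
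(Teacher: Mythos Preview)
Your proposal is correct and follows essentially the same route as the paper: both arguments use Theorem~\ref{thm:specialization} to reduce the question to computing $\mathrm{rank}_{\OK}(X/P_k X)$ for $X=\DFS^\eta$, then invoke the structure theorem for finitely generated $\AoK$-modules to show this rank equals $\mathrm{rank}_{\AoK}(X)$ once $P_k$ avoids the finitely many primes supporting the torsion part. If anything, you are slightly more careful than the paper in tracking the pseudo-null pieces, the $\mathrm{Tor}_1^{\AoK}$ contributions, and the effect of the involution $\iota$ on the $\eta$-components.
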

\begin{proof}
By Proposition~\ref{prop:torsion}, $\DFS$ is torsion as an $\tL$-module but it may not be torsion over ${\AoK}$. 
Write $H$ for the characteristic ideal of the ${\AoK}$-torsion submodule of $\DFS^{\eta}$, and factor it as $H=\prod_{1 \leq i \leq d}I_i^{m_i}$ where the $I^i$ are height one prime ideal of $\DFS^{\eta}$, and $m_i\geq 1$. Let
\[
\Sigma=\{I_1,\cdots , I_d\} \cup \{P_k \text{ such that } \Ker(s_k) \text{ is infinite}\}.
\]
Let $k$ be such that $P_k$ is not in $\Sigma$. The $\lambda$-invariant of $\Y(\Q(\mu_{p^\infty}),T_{f_k}^\star(1))^{\eta}$ is the $\Z_p$-rank of  $\Y(\Q(\mu_{p^\infty}),T_{f_k}^\star(1))^{\eta}$. By \eqref{eq:imp} and by the choice of $k$, we have a surjective map
\[
\frac{\DFS^\eta}{J_k\DFS^\eta} \overset{s_k}{\longrightarrow} \Y(\Q(\mu_{p^\infty}),T_{f_k}^\star(1))^{\iota,\eta}
\]
whose kernel is finite. Therefore, the  $\lambda$-invariant of $\Y(\Q(\mu_{p^\infty}),T_{f_k}^\star(1))^{\eta}$ is the $\Z_p$-rank of $\frac{\DFS^\eta}{J_k\DFS^\eta}$. We will show that the $\Z_p$-rank of $\frac{\DFS^\eta}{J_k\DFS^\eta}$ is equal to the ${\AoK}$-rank of the $(\tL)^\eta$-module $\DFS^{\eta}$ and that the latter is independent of $k$. The theorem will then follow.
	
Consider $\DFS^\eta$ as a module over ${\AoK}$.  By the structure theorem of modules over Iwasawa algebras, we can decompose 	$\DFS^\eta$ into an  ${\AoK}$-free part of a certain rank $r$ and an ${\AoK}$-torsion part of the form $\oplus_{i=1}^m {\AoK}/I_i^{m_i}$.  Recall that $P_k$ is a height one prime ideal of ${\AoK}$; it is the kernel of the evaluation map ${\AoK} \rightarrow \OO$ at $k$.  Also, note that we have chosen $k$ such that $P_k$ is not in $\Sigma$.  Therefore, $P_k$ is relatively prime to $I_i^{m_i}$ and hence ${\AoK}/(I_i^{m_i},P_k)$ is finite and hence has $\OK$-rank equal to $0$. Therefore, the $\OK$-rank of $\frac{\DFS^\eta}{J_k\DFS^\eta}$ is $r$, which is the ${\AoK}$-rank of $\DFS^\eta$.
\end{proof}

\section{Examples}
In this section, we sketch how to find examples satisfying the conditions of Theorems~\ref{thm:main_mu} and~\ref{thm:lambda}. We thank one of the referees for suggesting that we insert some numerical examples, and Nicolas Billerey for some discussions concerning them.

If there is any weight $k\in {\Zf}$ such that the residual representation of $\rho_{f_k}$ restricted to $G_{\Q_p}$ of the specialization $f_k$ of the Coleman family at the weight $k$ is irreducible, then the residual representation attached to the whole family $\boldsymbol{\rho}$ restricted to $G_{\Q_p}$ is irreducible and hence Hypothesis~\ref{hyp} is satisfied. Now, \cite[Theorem~2.6]{Edixhoven1992weight} says that the  residual representation of $\rho_{f_k}$ restricted to $G_{\Q_p}$  is irreducible if $f_k$ is a cuspidal eigenform whose $p$-th Fourier coefficient $a_p$ is $0$ in $\overline{\mathbb{F}}_p$ and $2 \leq k <p+1$.
Hence there are plenty of examples satisfying Hypothesis~\ref{hyp}. Consider the curve $y^2=x^3+32x+212$ (Cremona label or LMFDB label 140B, Antwerp label 140C) or the curve $y^2+xy=x^3-x^2-22x+884$ (Cremona label 182E, LMFDB label 182B, Antwerp label  182I). 
 Both of these curve  have supersingular reduction at the prime $p=3$. For both of these curves $a_3=3$. Their associated modular forms are weight $2$ cuspidal modular forms (cf. \cite{LMFDB}, and for the Antwerp labeling, see \cite[Table 1]{BirchKuyk}).

The next important hypothesis that we have made in Theorem \ref{thm:lambda} is a generalization of Conjecture A (see Remark \ref{rem:conja}).
In the following, we will explain how to find examples when Conjecture A is true for elliptic modular forms at supersingular primes.   
Suppose $p$ is a prime of good supersingular reduction for an elliptic curve $E$ over $\Q$. Note that the dual fine Selmer group of $E$ over the cyclotomic extension of $\Q$ is a quotient of the dual  signed Selmer groups which are torsion modules over the cyclotomic Iwasawa algebra. Recall also that the $\mu$-invariant is additive along a short exact sequence of finitely generated and torsion Iwasawa modules. Hence if one of the dual signed Selmer group over the cyclotomic extension of $\Q$ has $\mu$-invariant zero, then it is so for the dual fine Selmer group as well; hence Conjecture A is valid in those examples. 
Pollack in \cite{PollackTables} has already computed Iwasawa invariants of analytic  signed $p$-adic $L$-functions.  Recall that for CM elliptic curves over $\Q$ and for primes $p>3$ of supersingular reduction, the signed Iwasawa main conjectures have been proved by Pollack and Rubin in \cite{PollackRubin2004}. Now suppose that $E$ is an elliptic curve over $\Q$ without complex multiplication, with supersingular reduction at $p$, and such that the $p$-adic representation $\Gal(\overline{\Q}/\Q) \rightarrow \operatorname{GL}_{\Z_p}(T)$ on the automorphism group of the $p$-adic Tate module $T$ is surjective. Let $*\in\{+,-\}$ when $a_p=0$ and let $*\in\{\sharp,\flat\}$ if $a_p \neq 0$ (notations as in \cite{Kobayashi} and \cite{Sprung2012}). If $a_p\neq 0$ (by the Weil bound, this can occur only if $p=2,3$), choose  $*=\sharp$ or $\flat$ so that the $p$-adic $L$-function $L_p^*(E,X)$ is nonzero; both $L_p^\sharp(E,X)$ and $L_p^\flat(E/X)$ cannot be simultaneously zero (see~\cite[p. 1486]{Sprung2012}).
Under these assumptions, it is known that the characteristic ideal of $\mathcal{X}^*(E/\Q_\cyc)$ contains the ideal generated by $L_p^*(E,X)$; here $\mathcal{X}^*(E/\Q_\cyc)$ is 
the Pontryagin dual of the ($p$-primary) signed  Selmer group $\mathrm{Sel}^*(E/\Q_\cyc)$ over the cyclotomic $\Z_p$-extension of $\Q$ (see \cite[Theorem 4.1]{Kobayashi} for the case $a_p=0$ and see \cite[Theorem 1.4]{Sprung2012} for the case $a_p\neq 0$).
So if the $\mu$-invariant of $L_p^*(E,X)$ is trivial, then it is so for that of $\mathcal{X}^*(E/\Q_\cyc)$.
 For example, consider the examples of elliptic curves mentioned in the previous paragraph, i.e. with Cremona labels 140B and 182E. 
  For each of these curves, the  associated mod-$3$ Galois representation $G_{\Q} \rightarrow  \mathrm{Aut}(E[3]) \cong GL_2(\mathbb{F}_3)$ is surjective (see \cite[Theorem 3.2]{ReverterVila} which uses Antwerp labeling) and hence the $3$-adic representation $\Gal(\overline{\Q}/\Q) \rightarrow \operatorname{GL}_{\Z_3}(T)$ is also surjective (see \cite[p. 261]{Serre1972proprietes}). Also note that  both the $\mu$-invariants of $L_3^\sharp(E,X)$
and $L_3^\flat(E,X)$ are zero (see \cite{LMFDB} or \cite{PollackTables}).

\begin{remark}
	If we don't assume that the associated $p$-adic Galois representation is surjective then \cite[Theorem 4.1]{Kobayashi} or \cite[Theorem 1.4]{Sprung2012} will not imply the triviality of the $\mu$-invariant of $\mathcal{X}^*(E/\Q_\cyc)$ from that of the triviality of the $\mu$-invariant of $L_p^*(E,X)$. This is because their theorem will give that the characteristic ideal of $\mathcal{X}^*(E/\Q_\cyc)$ will contain the ideal generated by $p^nL_p^*(E,X)$ for some $n \geq 0$. 
\end{remark}

\bibliographystyle{amsalpha}
\bibliography{biblio}
\end{document}